\crefname{hypothesis}{Hypothesis}{Hypotheses}
\DeclareMathAlphabet{\mathbf}{OT1}{cmr}{bx}{it}
\newcommand{\vb}{\mathbf b}
\newcommand{\ve}{\mathbf e}
\newcommand{\vf}{\mathbf f}
\newcommand{\vr}{\mathbf r}
\newcommand{\vt}{\mathbf t}
\newcommand{\vu}{\mathbf u}
\newcommand{\vv}{\mathbf v}
\newcommand{\vw}{\mathbf w}
\newcommand{\vx}{\mathbf x}
\newcommand{\vy}{\mathbf y}
\newcommand{\vz}{\mathbf z}
\newcommand{\vU}{\mathbf U}
\newcommand{\vV}{\mathbf V}
\newcommand{\vW}{\mathbf W}
\newcommand{\veta}{\boldsymbol \eta}
\newcommand{\CK}{\mathcal{K}}
\newcommand{\rng}{\mathrm{Range}}
\newcommand{\rnga}[1]{\mathrm{Range}\left( #1 \right)}
\newcommand{\ang}[2]{\theta\left( #1,\,#2 \right)}
\newcommand{\argmin}[1]{ \underset{#1}{\mathrm{argmin}}}
\newcommand{\norm}[1]{\left\|#1\right\|}
\DeclareMathOperator{\spn}{span}
\newcommand{\rev}[1]{#1}
\title{GMRES with randomized sketching and deflated~restarting}
\author{Liam Burke\thanks{School of Mathematics, Trinity College Dublin, College Green, Dublin 2, Ireland, \email{burkel8@tcd.ie}. L.\,B. acknowledges funding from an Irish Research Council Government of Ireland Postgraduate Scholarship.}
\and Stefan G\"{u}ttel\thanks{Department of Mathematics, The University of Manchester, M13 9PL Manchester, United Kingdom, \email{stefan.guettel@manchester.ac.uk}}.  S.\,G.'s work is partly funded by a Royal Society Industry Fellowship IF/R1/231032.
\and
Kirk M.~Soodhalter\thanks{School of Mathematics, Trinity College Dublin, College Green, Dublin 2, Ireland, \email{ksoodha@maths.tcd.ie}.
}}
\begin{document}

\maketitle

% REQUIRED
\begin{abstract}
\rev{
We present a new Krylov subspace recycling method for solving a linear system of equations, or a sequence of slowly changing linear systems.  Our approach is to reduce the computational overhead of recycling techniques while still benefiting from the acceleration afforded by such techniques. As such, this method augments an unprojected Krylov subspace. Furthermore, it combines randomized sketching and deflated restarting in a way that avoids orthogononalizing a full Krylov basis.  We call this new method GMRES-SDR (sketched deflated restarting).  With this new method, we provide new theory, which initially characterizes unaugmented sketched GMRES as a projection method for which the projectors involve the sketching operator.  We demonstrate that sketched GMRES and its sibling method sketched FOM are an MR/OR pairing, just like GMRES and FOM.  We furthermore obtain residual convergence estimates.  Building on this, we characterize GMRES-SDR also in terms of sketching-based projectors.  Compression of the augmented Krylov subspace for recycling is performed using a sketched version of harmonic Ritz vectors.  We present results of numerical experiments demonstrating the effectiveness of GMRES-SDR over competitor methods such as GMRES-DR and GCRO-DR.
}
\end{abstract}

% REQUIRED
\begin{keywords}
linear systems, Krylov method, subspace recycling, randomized sketching
\end{keywords}

% REQUIRED
\begin{MSCcodes}
65F60, 65F50, 65F10, 68W20 
\end{MSCcodes}
\overfullrule=0pt %Had to add this to avoid black boxes at the end

\section{Introduction}
This paper is concerned with the development of a Krylov subspace recycling algorithm for
the efficient solution of a linear system of equations
\[
    A \vx = \vb
\]
with a nonsymmetric matrix $A \in \mathbb{C}^{N \times N}$ and a right-hand side vector $\vb \in \mathbb{C}^{N}$. Our algorithm can also naturally be applied to solve sequences of nearby linear systems
\begin{equation}\label{eq:sequence_of_linear_systems}
    A^{(i)} \vx^{(i)} = \vb^{(i)}, \qquad i=1,2,\ldots,
\end{equation}
using accumulated Krylov information computed to improve convergence as the sequence progresses. Such problems arise in a variety of scientific computing applications (see, e.g, \cite{parks2006recycling}), and 
Krylov subspace recycling has been demonstrated to be a successful technique in reducing the computational resources and runtime required to solve the full sequence of problems~\cite{eveolving_structures, gaul2014recycling}.
Recycling algorithms belong to the class of augmented Krylov subspace methods, where the augmentation subspace for each problem is constructed or \textit{recycled} from the Krylov subspace used to solve a previous problem in the sequence. Although recycling introduces additional computational overhead per iteration when compared to non-augmented methods, the presence of the recycling subspace can often allow each problem to converge faster and in much less iterations, resulting in a reduction in overall computational cost and runtime for the full problem sequence.

One of the dominating computational kernels of common restarting or recycling methods, such as GMRES-DR~\cite{Morgan2002} or GCRO-DR~\cite{parks2006recycling}, is the orthogonalization of a full Krylov basis in each cycle. In this paper, we build on  existing work in \cite{NakatsukasaTropp21,BalabanovGrigori22,GS23,BG23} to develop a new GMRES-based recycling method, named GMRES-SDR, which reduces the expensive orthogonalization costs using the well established technique of randomized sketching~\cite{woodruff2014sketching}.  

\rev{
In order to fully describe the theoretical underpinnings of a sketched recycling method like GMRES-SDR, we expand the existing theory of sketched GMRES.  We show that it is indeed a projection method wherein the projector involves the sketching operator.  This allows us to build on the projection framework for recycling methods presented in \cite{Soodhalter:deSturler:Kilmer.2020.framework} to describe and understand the GMRES-SDR.  We are able to show that the sketched recycling methods fit into this same framework, again with the associated projectors involving the sketching operator.  We also expand the theory of sketched GMRES convergence to build upon when analysing GMRES-SDR.  Namely, we show that the convergence behavior of sketched GMRES is closely linked to subspace angles and Givens rotations, just as was shown in \cite{EiermannErnst2001} for GMRES.  Indeed, sketched GMRES and sketched FOM are shown to share the same MR/OR relationship and GMRES and FOM, as described in \cite{Brown1991}.
}

The structure of the paper is as follows. 
In \Cref{sec:background} we briefly review some of the most commonly used recycling methods and introduce our notation. \Cref{sec:sdr} introduces the new GMRES-SDR method, including the implementation details.  As analysis of augmented and recycled Krylov subspace methods is often built on top of the analysis of the method being augmented, in \Cref{section:sketched-gmres-analysis} we develop some additional theory about sketched GMRES as a projection-type scheme and its behavior.  We build on this in \Cref{section:sketched-aug-gmres-analysis} by showing that sketched augmented GMRES exhibits many of the features of an augmented projection method as laid out in \cite{Soodhalter:deSturler:Kilmer.2020.framework}, and we extend the analysis of sketched GMRES developed in \Cref{section:sketched-gmres-analysis} to get some computable convergence bounds for sketched augmented GMRES.  In \Cref{sec:numerical-experiments}, we demonstrate that the sketched augmented GMRES method we propose in this manuscript is effective and reduces time-to-solution over other augmented and recycled methods for some large-scale problems arising in the computational sciences. 

\rev{
In this work, we frequently describe the workings of various (augmented) projection methods in terms of the actions of projectors.  It is important in this context to differentiate between the quantities/subspaces associated to the solution and error and those associated to the right-hand side and residual.  Any projection method we discuss can be described either in terms of projectors acting on the error or equivalently in terms of a related projector acting on the residual.  These related projectors always come in pairs $(\Pi,\Phi)$. In this manuscript, any projector acting on an error quantity is denoted by $\Pi$ with a subscript, where the subscript indicates the range of the projector.  The related projector acting on a residual quantity is denoted by $\Phi$ with a subscript, and it shares the same subspace-related subscript with its pair-mate.  For example, classic GMRES (cf. \cref{section:GMRES-classic}) over a Krylov subspace $\CK_m$ can be shown to generate an approximation $\vx_m = \vx_0 + \Pi_{\CK_m}\veta_0$ (with $\veta_0$ being the initial error) such that the residual $\vr_m = \vb-A\vx_m = \left( I - \Phi_{\CK_m} \right)\vr_0$, where $\Pi_{\CK_m}$ is the $A^\ast A$-orthogonal projector onto $\CK_m$ and $\Phi_{\CK_m}$ is the orthogonal projector onto $A\CK_m$.
In the case that projectors associated to sketched version of a method are discussed, these projectors are denoted with hats (e.g., $\widehat{\Phi}_{\CK_m}$ for sketched GMRES).  
}

\section{Relevant background}\label{sec:background}
This paper concerns augmented and sketched \rev{variants} of GMRES, and so we need to introduce some relevant theory and notation concerning GMRES and augmented/recycled \rev{variants} thereof. This discussion is abbreviated, focusing on the aspects we need to develop and analyze the methods we propose in subsequent sections.

\subsection{The classic GMRES method}\label{section:GMRES-classic}
We describe one cycle of classic GMRES~\cite{SaadSchultz1986} for solving a single linear system $A \vx = \vb$ with initial guess $\vx_0$ and associated residual $\vr_0 := \vb - A\vx_0$.  GMRES generates an orthonormal basis for the Krylov space $\CK_m( A,\vr_0) := \spn\{\vr_0,A\vr_0, \ldots,A^{m-1}\vr_0\}$, which are stored as the columns of a matrix $V_m\in\mathbb{C}^{N\times m}$.  This basis is generated by the modified Gram--Schmidt process, satisfying the Arnoldi relation
\begin{align}
     A V_m
    =
    V_{m+1}\underline{H_m},
    \label{eq:Arnoldi}
\end{align}
    where $\underline{H_m}\in\mathbb{C}^{(m+1)\times m}$ is upper Hessenberg.  GMRES computes the minimizer
    \begin{align*}
        \vx_m 
        = 
        \argmin{\vt\in\CK_m( A,\vr_0)}
        \norm
        {
            \vr_0 -  A\vt
        },
    \end{align*}
    which is equivalent to setting $\vx_m = V_m\vy_m $ with 
    \begin{align*}
        \vy_m 
        = 
        \argmin{\vy\in\mathbb{C}^m}
        \norm
        {
            \underline{H_m}\vy - \norm{\vr_0}\ve_1
        }
        .
    \end{align*}
    The GMRES least squares approximation for $A\vx = \vb$ using any basis  $V_m\in\mathbb{C}^{N\times m}$ can be written 
    as $\vx_m = \vx_0 +  V_m \vy_m$, where $\vy_m=\left(  V_m^\ast A^\ast A V_m \right)^{-1} V_m^\ast A^\ast\vr_0$.  The residual can thus be written  as 
    \begin{align*}
        \vr_m = (I -  A V_m\left(  V_m^\ast A^\ast A V_m \right)^{-1} V_m^\ast A^\ast)\vr_0,
    \end{align*}
    which is an orthogonal projection of $\vr_0$ onto $\rng( A V_m)^\perp$, leading to the well-known fact that $\rng( A V_m)$ is the GMRES residual constraint space.  Let $\Phi_{\CK_m} =  A V_m\left(  V_m^\ast A^\ast A V_m \right)^{-1} V_m^\ast A^\ast$ be the orthogonal projector onto $\rng( A V_m)$,  which we call a \emph{residual projector}.  There is a sibling \emph{error projector,} \linebreak $\Pi_{\CK_m} =  V_m\left(  V_m^\ast A^\ast A V_m \right)^{-1} V_m^\ast A^\ast A$, which is the $ A^\ast A$-orthogonal projector onto the space $\CK_m( A,\vr_0)$ so that $\vx_m = \vx_0 + \Pi_{\CK_m}\veta_0$, where $\veta_0 = \vx-\vx_0$.
    
    \subsection{Recycling and augmentation of minimum residual methods}\label{section:augmentation-framework}
    In \cite{Soodhalter:deSturler:Kilmer.2020.framework} it was established that augmented subspace methods (such as augmented Krylov methods and recycling methods) can all be characterized in a general \emph{projection--correction} framework.  In the case of a minimum residual method, choosing $\vf_m\in\rng\left(\vV_m\right)$ for $\vV_m = [ U, V_m]$ such that  
    \begin{align*}
        \vf_m 
        = 
        \argmin{\vf\in\rng\left(\vV_m\right)}
        \norm{ 
            \vb -  A\left( \vx_0 + \vf \right) 
        }
    \end{align*}
    is equivalent to selecting 
    \begin{align*}
        \vt_m
        =
        \argmin{\vt\in\rng\left( V_m \right)}
        \norm{ 
            \left( I - \Phi_U \right)\left( \vb -  A\vt \right) 
        },
    \end{align*}
    where $\Phi_U$ is the orthogonal residual projector onto $\rng( A U)$, and constructing $\vx_m = \vx_0 + \Pi_U\veta_0 + (I-\Pi_U)\vt_m$, where $\Pi_U$ is the sibling $ A^\ast A$-orthogonal error projector onto $\rnga{U}$.  In other words, the augmented minimum residual method approach is equivalent to applying a minimum residual method to the projected subproblem
    \begin{align}
        \left( I - \Phi_U \right)A\vt
        =
        \left( I - \Phi_U \right)\vb
        \label{eq:proj-subprobem}
    \end{align}
    over the approximation space $\rnga{V_m}$ and then applying some projections back into $\rnga{U}$ to construct $\vx_m$.

    \subsection{GMRES-DR and GCRO-DR are sometimes equivalent}\label{section:gmresdr-gcrodr-equiv}
    It was observed in \cite{Parks:2005-Thesis} that GCRO-DR \cite{parks2006recycling} and GMRES-DR \cite{Morgan2002} are mathematically equivalent in limited circumstances.
    
    The GCRO-DR method \cite{parks2006recycling} takes $\rnga{V_m} = \CK_m\left(\left( I - \Phi_U \right)A,\left( I - \Phi_U \right)\vr_0\right)$ with $V_m$ being built by the Arnoldi process.  Thus it satisfies the Arnoldi relation for the projected operator,
    \begin{align*}
        \left( I - \Phi_U \right)A V_m 
        = 
        V_{m+1}\underline{H_m}
%        \iff
%        A V_m
%        =
%        \Phi_U A V_m 
%        +
%        V_{m+1}\underline{H_m}
        .
    \end{align*}
    From this, it follows that the minimization is equivalent to applying a GMRES iteration to \eqref{eq:proj-subprobem}.  This method accommodates beginning the iteration with an arbitrary~$U$, and any technique of selecting vectors to recycle can be used.

    The GMRES-DR method \cite{Morgan2002} takes a similar approach but is restricted to reusing subspace information for a single linear system between restarts.  At the end of a cycle, the method computes harmonic Ritz vectors which are then orthogonalized to save for the next cycle. Let $U$ denote the matrix with the harmonic Ritz vectors as columns.  The last GMRES residual $\vr_m$ is orthogonalized to produce $\vv_{k+1} = (I - UU^\ast)\vr_m$.  These orthogonalizations are performed efficiently by taking advantage of the Arnoldi relations of the just-completed GMRES cycle.  It is proven that if harmonic Ritz vectors are used, the space $\mathrm{span}\left\lbrace \vu_1, \vu_2, \ldots, \vu_k, \vv_{k+1} \right\rbrace$ is a Krylov subspace and an Arnoldi relation for it can be cheaply built.  Thereafter, the iteration continues.  Because the augmentation space $\rnga{U}$ can be generated so that the augmented subspace remains a Krylov subspace, GMRES-DR has a leaner implementation than GCRO-DR.  In particular, with a proper Arnoldi relation, only $U$ needs to be stored.  There is no need to store $AU$.

    %The GMRES-DR and GCRO-DR algorithms are mathematically equivalent in certain circumstances, as observed in \cite{Parks:2005-Thesis}. 
    Consider running a cycle of GMRES, stopping to restart, and computing some harmonic Ritz vectors to carry over to the next cycle.  GMRES-DR proceeds in the next cycle by constructing an orthonormal basis for $\vV_m$ which is established to be a Krylov subspace (since $U$ consists of harmonic Ritz vectors from the previous cycle).  We observe that $\vv_j$ for $j = k+2, k+3, \cdots, m$ is generated by applying $A$ to $\vv_{j-1}$, orthogonalizing against the columns of $U$, and orthogonalizing against $\vv_{i}$ for $i=k+1, k+2, \ldots, j-1$, i.e., perform the Arnoldi process. This means that $\left\lbrace \vv_{k+1}, \vv_{k+2}, \ldots, \vv_{j-1}, \vv_j\right\rbrace$ forms a basis for $\CK_{j-k-1}\left( (I - UU^\ast)A, (I - UU^\ast)\vr_m \right)$, the space which would be built by GCRO-DR in this setting.  One observes this by noting that $U$ being built from harmonic Ritz vectors means they satisfy an Arnoldi relation
$
        AU
        =
        [
            U , \vv_{k+1}
        ]
        \underline{\widetilde{H}_m},
$  
    where Morgan \cite{Morgan2002} derives a cheap procedure for obtaining the upper Hesssenberg matrix~$\underline{\widetilde{H}_m}$.

\section{Derivation of GMRES-SDR}\label{sec:sdr}
\rev{In this section, we briefly review the basics of sketched GMRES~\cite[eq.~(1.6)]{NakatsukasaTropp21} before developing GMRES-SDR.

Let us first consider a single linear system $A \vx = \vb$ with initial guess $\vx_0$ and associated residual $\vr_0 := \vb - A\vx_0$. 
We will assume the columns of $\vV_{m}$ span some search space.}
In order to exploit randomized sketching (see, e.g., \cite{martinsson2020randomized,balabanov2019randomized,balabanov2021randomized,BalabanovGrigori21,NakatsukasaTropp21}), we further assume that we have an operator $S\in\mathbb{C}^{s\times N}$ with $m < s\ll N$ which acts as an approximate isometry for the Euclidean norm $\|\,\cdot\,\|$. More precisely, given a positive integer $m$ and some $\varepsilon\in [0,1)$, let $S$ be such that for all vectors~$\vv\in\spn(\vV_m)$,
\begin{equation}
(1-\varepsilon) \| \vv \|^2 \leq \| S \vv\|^2 \leq (1+\varepsilon) \|\vv\|^2.
\label{eq:sketch}
\end{equation}
The mapping $S$ is called an \emph{$\varepsilon$-subspace embedding} for $\spn(\vV_m)$; see, e.g.,~\cite{sarlos2006improved,woodruff2014sketching,martinsson2020randomized}. 
Condition~\eqref{eq:sketch} can equivalently be stated with the Euclidean inner product~\cite[Cor.~4]{sarlos2006improved}: for all $\vu,\vv \in \spn(\vV_m)$,  
\begin{equation}{\label{eq:sketch_innerproduct}}
\langle \vu, \vv \rangle - \varepsilon \| \vu\|\cdot \|\vv\|
            \leq \langle S\vu, S\vv \rangle 
            \leq \langle \vu, \vv \rangle + \varepsilon \| \vu\|\cdot \|\vv\|.\nonumber
\end{equation}
In practice, $S$ is not explicitly available but we can draw it at random to achieve~\eqref{eq:sketch} with high probability.

\rev{ In the case of sketched GMRES, the columns of $\vV_{m}$ form a (not necessarily orthogonal) basis for the Krylov subspace $\mathcal{K}_m(A,\vr_0) = \spn\{\vr_0,A\vr_0, \ldots,A^{m-1}\vr_0\}$. The algorithm computes a correction $\vt_{m} \in \spn (\vV_{m})$ such that the associated sketched residual $S \vr_{m}$ satisfies $ S \vr_{m} \perp S A \vV_{m}$, leading to the updated solution
\begin{equation}\label{eq:sGMRES_problem}
  \vx_{m} = \vx_{0} + \vV_{m} \vy_{m}, \enspace    \vy_{m} = \argmin{\vy \in \spn(\vV_{m})}
        \norm
        {
            S A \vV_{m} \vy - S \vr_{0}
        }.
\end{equation}
The size of the sketched problem \eqref{eq:sGMRES_problem} depends only on $s$ and $m$, and can thus be solved cheaply without a full orthogonalization of the basis $\vV_{m}$. This is the  attractive feature of sketched GMRES which serves as our motivation for incorporating sketching into recycled GMRES.

In this work, we are interested in the case where $\vV_m = [U,V_m]$ where $U$ is an augmentation subspace, and $V_m$ is a (not necessarily orthogonal) Krylov basis of $\mathcal{K}_m(A,\vr_0)$.} By \cite[Remark~3.1]{BG23}, the \rev{sketched} augmented GMRES-type approximant for a matrix function $f(A)\vb$ is 
\begin{equation}
\widetilde{\vx}_m = \vV_m f\left([(S\vW_m)^* S\vV_m]^{-1} (S\vW_m)^* S\vW_m\right) [(S\vW_m)^* S\vV_m ]^{-1} (S\vW_m)^*S\vr_0,
\label{eq:sgmresf}
\end{equation}
with $\vW_m = A\vV_m$. Here we have $f(z) = z^{-1}$, in which case this simplifies to
\[
\widetilde{\vx}_m = \vV_m \left[(SA\vV_m)^* SA\vV_m\right]^{-1}  (SA\vV_m)^*S\vr_0,
\]
or alternatively,
\begin{equation}
\widetilde{\vx}_m = \vV_m \widetilde{\vy}_m, \quad \widetilde{\vy}_m = (SA\vV_m)^\dagger (S\vr_0).
\label{eq:sgmres}
\end{equation}
This means that 
\[
\widetilde{\vx}_m \ \text{minimizes} \ \texttt{sres} := \| S\vr_0 - SA\widetilde \vx\| \ \text{over all} \  \widetilde \vx \in\spn(\vV_m), 
\]
\rev{and we note that this is also the definition of the sGMRES approximant.}
The interesting aspect here is that this approximant does not rely on orthogonality of $\vV_m$, yet its residual $\widetilde{\vr}_m=\vr_0 - A\widetilde{\vx}_m$ is closely related to the residual $\vr_m = \vr_0 - A\vx_m$ of the full GMRES approximant $\vx_m := \vV_m (A\vV_m)^\dagger \vr_0$. This latter approximant
\[
{\vx}_m \ \text{minimizes} \   \| \vr_0 - A\widetilde \vx\| \ \text{over all} \  \widetilde \vx \in\spn(\vV_m). 
\]
Combining this with \eqref{eq:sketch}, we have the following residual relations
\begin{equation}
        \|  \vr_m\| \leq {\|\widetilde \vr_m\|} \leq \frac{1}{\sqrt{1-\varepsilon}}  \|S \widetilde \vr_m \| \leq  \frac{1}{\sqrt{1-\varepsilon}}  \|S \vr_m \|
         \leq \sqrt{\frac{1+\varepsilon}{1-\varepsilon}} \| \vr_m \|.
         \label{eq:reschain}
\end{equation}

\subsection{Implementation}\label{sec:impl} The GMRES-SDR method is now fairly straightforward to implement, as detailed in Algorithm~\ref{alg:gmres_sdr}. There are several  components that differ from the usual GMRES implementation and we discuss them separately in the following subsections.

\begin{algorithm}[t]
\caption{One cycle of GMRES-SDR for $A\vx = \vr_0$ \label{alg:gmres_sdr}}
\begin{algorithmic}[1]
\STATE{\textbf{Input:} ${A}\in\mathbb{C}^{N\times N}$, $\vr_0\in\mathbb{C}^{N}$,  
$U\in\mathbb{C}^{N\times \widehat k}$, $S\in\mathbb{C}^{s\times N}$, $\texttt{tol}$, integers $m$, $t$, $k$}
\STATE{\textbf{Optional input:} $SU \in\mathbb{C}^{s\times \widehat k}$, $SAU \in\mathbb{C}^{s\times \widehat k}$, $\texttt{safety}\geq 1$ (default 1.4)}\\[1mm]
\STATE{\textbf{Output:} Approximant $\widetilde\vx \approx A^{-1} \vr_0$ and recycling subspace $U,SU,SAU$}\\[1mm]
\STATE{If not provided as input, compute sketches $SU$ and $SAU$}
\STATE{Sketch initial residual $S\vr_0$}
\STATE{$V :=[ \vr_0/\| \vr_0\|, O_{N\times m} ]$, $H:=O_{(m+1)\times m}$}
\STATE{$SV :=[ S\vr_0/\| \vr_0\|, O_{N\times m} ]$, $SAV:=O_{s\times m}$ }
\STATE{\% ------ Truncated Arnoldi process with QR-based residual estimation ------}
\STATE{\textbf{for} $j = 1:m$}
\STATE{\qquad $\vw:= A V(\,:\,,j)$}\\
\STATE{\qquad \textbf{for} $i=\max(j-t+1,1)\rev{\ :j}$}
\STATE{\qquad \qquad $H(i,j) := V(\,:\,,i)^* \vw$}
\STATE{\qquad \qquad $\vw  := \vw - V(\,:\,,i) H(i,j)$}
\STATE{\qquad $H(j+1,j) := \|\vw\|$}
\STATE{\qquad $V(\,:\,,j+1) := \vw/H(j+1,j)$}
\STATE{\qquad $SV(\,:\,,j+1) := S\cdot V(\,:\,,j+1)$ \quad \quad\quad\quad \quad\quad\quad\quad \quad\% one sketch per iter}
\STATE{\qquad $SAV(\,:\,,j) := SV(\,:\,,1:j+1) H(1:j+1,j)$}
\STATE{\qquad $SAW := [ SAU , SAV(\,:\,,1:j) ]$}
\STATE{\qquad Compute (or update) economic $[Q,R] = \texttt{qr}( SAW )$} 
\STATE{\qquad Compute least squares solution $\widetilde\vy := R^{-1} Q^* [S\vr_0]$}
\STATE{\qquad $\texttt{sres} := \| S\vr_0 - SAW \cdot \widetilde\vy \|$\quad\quad\quad\quad\quad\quad\quad\quad\quad\quad\quad \% sketched residual} 
\STATE{\qquad \textbf{if} $\texttt{sres} < \texttt{tol}/\texttt{safety}$ \textbf{or} $j=m$}
\STATE{\qquad \qquad $\widetilde\vx := [U,V(\,:\,,1:j)]\cdot \widetilde\vy$}
\STATE{\qquad \qquad $\texttt{res}:= \| \vr_0 - A\widetilde\vx \|$ \quad\quad\quad\quad\quad\quad\quad\quad\quad\quad\quad\quad\ \% true residual} 
\STATE{\qquad \qquad \textbf{if} $\texttt{res} < \texttt{tol}$}
\STATE{\qquad \qquad \qquad \textbf{break} \quad\quad\quad\quad\quad\quad\quad\quad\quad\quad\quad\quad\quad\quad\quad\ \% stop trunc.\ Arnoldi}
\STATE{\qquad \qquad \textbf{else}}
\STATE{\qquad \qquad \qquad $\texttt{safety}:=\texttt{res}/\texttt{sres}$ \quad\quad\quad\quad\quad\quad\quad\quad\quad\,\% increase safety factor}
\STATE{\% --------- SVD-based update of recycling space using harmonic Ritz ---------}
\STATE{Compute truncated economic SVD $\widehat U_\ell\Sigma_\ell \widehat V_\ell^* \approx SAW$}
\STATE{Compute $M_\ell := \widehat U_\ell^* \cdot SW\cdot \widehat V_\ell$}
\STATE{Compute ordered QZ decomposition $[Q,Z] = \texttt{qz} (M_\ell,\Sigma_\ell)$}
\STATE{Compute $U: = [U,V(\,:\,,1:m)] \widehat V_\ell Z(\,:\,,1:k)$}
\STATE{Compute $SU: = [SU,SV(\,:\,,1:m)] \widehat V_\ell Z(\,:\,,1:k)$}
\STATE{Compute $SAU: = [SAU,SAV(\,:\,,1:m)] \widehat V_\ell Z(\,:\,,1:k)$}
\STATE{\textbf{return} $\widetilde\vx$, $U$, $SU$, $SAU$}
\smallskip
\end{algorithmic}
\end{algorithm}

\subsection{Krylov basis generation} In order to generate the Krylov bases $V_m$ at a cost that grows only linearly with $m$, we use the truncated Arnoldi process in Algorithm~\ref{alg:gmres_sdr}; \rev{see, e.g., \cite[Alg.~6.6]{Saad2003}.} This means that each new basis vector is projected against the previous $t\ll m$ vectors only.

\subsection{Solution of the projected least squares problem} In standard GMRES the projected least squares problem is usually solved by updating a QR factorization of the Hessenberg matrix $\underline{H_m}$ using Givens rotations. Here, the matrix $SA\vV_m$ in the least squares problem \eqref{eq:sgmres} is not of Hessenberg form. However, the matrix $\vV_m = [U,V_m]$ grows one column at a time when the truncated Arnoldi process for generating~$V_m$ progresses, hence we can still update the QR factorization of $SA\vV_m$ when a column is added, e.g., by using Householder-QR or the modified Gram--Schmidt with reorthogonalization.

\subsection{Residual control and updating \texttt{safety}}\label{sec:control-and-safety} 

The residual of the sketched problem \eqref{eq:sgmres} can serve as an approximation for the true residual of the current approximant using 
$$ {\|\widetilde \vr_m\|} \leq \frac{1}{\sqrt{1-\varepsilon}}  \|S \widetilde \vr_m \|. $$
However, we generally do not know what $\varepsilon$ is. Hence we propose the following strategy: within the GMRES-SDR cycle we monitor the sketched residual norm $\|S \widetilde \vr_m \|$. Only if this norm satisfies 
\[
\|S \widetilde \vr_m \| < \texttt{tol} / \texttt{safety}
\]
for some safety factor $\texttt{safety}>1$, we compute the true residual norm $\| \widetilde \vr_m\|$. If also this quantify is smaller than $\texttt{tol}$, we can terminate the Arnoldi iteration. Otherwise we will increase the safety factor to
\[
\texttt{safety} = \| \widetilde \vr_m\| / \| S\widetilde \vr_m\|
\]
and continue. 

An example illustration is given in \Cref{fig:sio2_distort.pdf}. The restart length in this example is chosen as $m=100$, and it is interesting to note that the \rev{distortion $\|\widetilde \vr_j\| / \|S \widetilde \vr_j\|$ does not look completely random, but appears to locally follow upward and downward trends within each Arnoldi cycle. We also notice a spike in the measured distortion of every $m$th basis vector, $\| \vv_{km} \|/\|S \vv_{km}\| = 1/\|S \vv_{km}\|$, with the height of the spike being very close to the distortion on residual. This is likely caused by the GMRES property that the Krylov basis vector  at the beginning of a cycle is strongly correlated with the residual $ \widetilde\vr_{km}$ from the previous cycle (in FOM these vectors would be perfectly collinear), and the fact that $\|S \widetilde\vr_{km}\|$ is minimized over the current subspace by sketched GMRES.} 
In our experiments we found that an initial value $\texttt{safety}=1.4$ works well, with the computation of the true residual only needed very rarely (apart from the unavoidable computation at the end of each Arnoldi cycle). 

\begin{figure}
    \centering
    \includegraphics[width=0.8\textwidth]{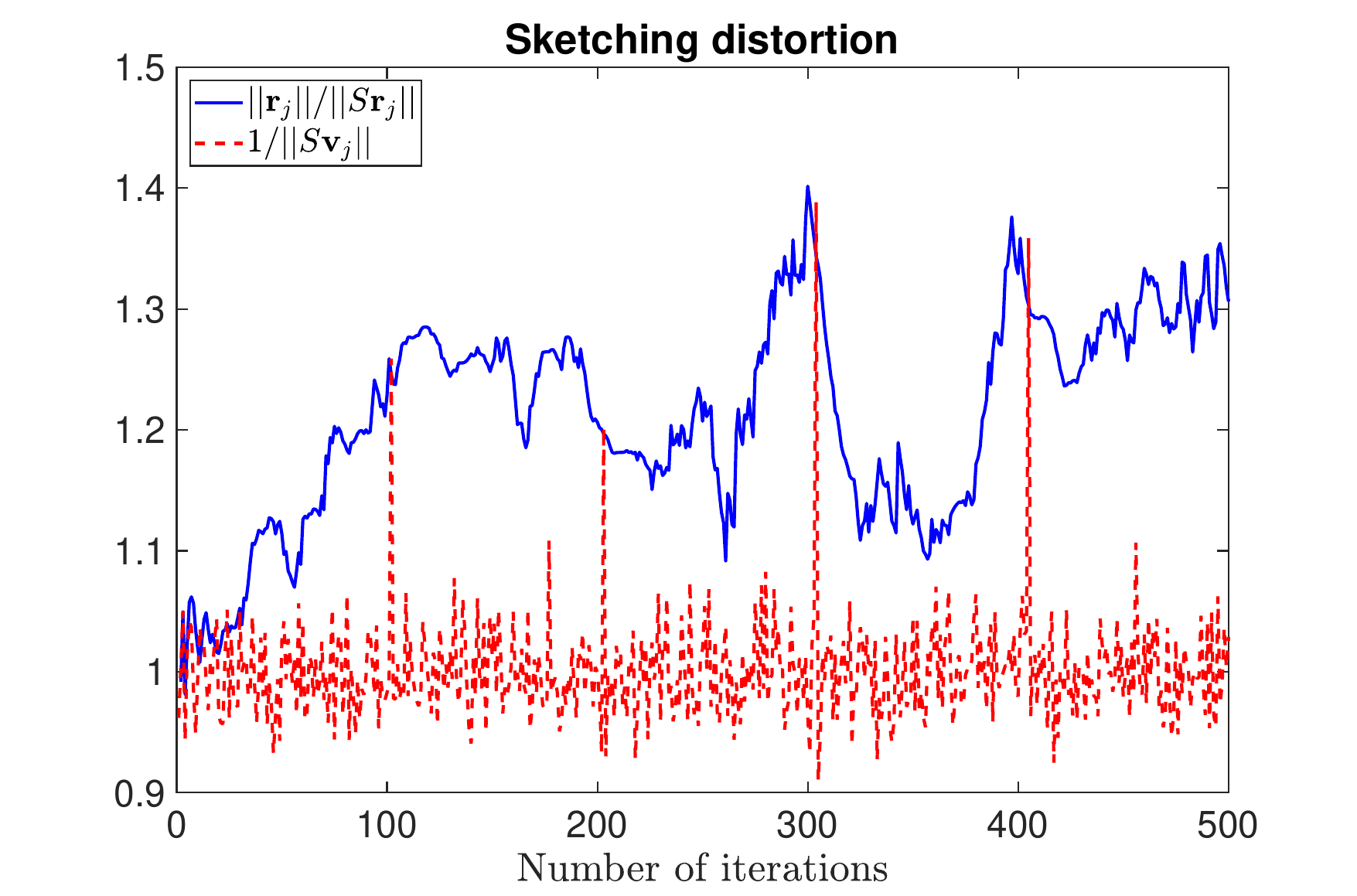}
    \caption{Example showing measured sketching distortion factors as the GMRES-SDR iteration progresses, as well as the ratio between the true and sketched residuals. In this example, the Krylov dimension of size $m = 100$, a truncation parameter of $t=2$, a recycling subspace dimension of size $k=20$, and a sketching parameter $s = 500$ was used. }
    \label{fig:sio2_distort.pdf}
\end{figure}
 
\subsection{Harmonic updating of the augmentation space}\label{sec:sRRharm}

\rev{We now turn our attention to the problem of updating the augmentation space $U$ from one restart cycle or problem to the next. 
Let the columns of the matrix $\vV_m$ span some subspace, and assume that we have the sketches $S \vV_m$ and $S A \vV_m$ at our disposal. 
The (non-sketched) harmonic Ritz pairs~\cite{PaigeEtAl1995} of the matrix A with respect to the subspace $\spn (\vV_{m})$} are defined as $(\vartheta,\vz=\vV_m \vy_m)$ such that
\[
    A\vz - \vartheta \vz \perp A \vV_m, \quad \rev{\vz \neq 0.}
\]
Therefore,
\[
(A\vV_m)^* ( A\vV_m \vy_m - \vartheta \vV_m \vy_m ) = 0,
\]
which means that
\[
(A\vV_m)^* (A\vV_m) \vy_m = \vartheta (A\vV_m)^* \vV_m \vy_m,
\]
or equivalently,
\[
 \vy_m = \vartheta [(A\vV_m)^* (A\vV_m)]^{-1} (A\vV_m)^* \vV_m \vy_m = \vartheta (A\vV_m)^\dagger \vV_m \vy_m.
\]
In other words, the harmonic Ritz pairs can be computed by solving an eigenvalue problem 
\[
M\vy_m := [(A\vV_m)^\dagger \vV_m] \vy_m = \vartheta^{-1} \vy_m.
\]

Now to utilize sketching, we realize that each column of the matrix $M$ can be thought of as the solution of a least squares problem with the tall skinny matrix~$A\vV_m$.

\rev{Or more explicitly, we define the sketched harmonic Ritz pairs of the matrix $A$ with respect to $\spn(\vV_{m})$ as $(\vartheta,\vz=\vV_m \vy_m)$ such that
\[
    S (A\vz - \vartheta \vz) \perp S A \vV_m, \quad \rev{\vz \neq 0.}
\] 
Therefore,
\[
(S A\vV_m)^* ( S A\vV_m \vy_m - \vartheta S \vV_m \vy_m ) = 0,
\]
which, as before, we can equivalently write as,
\[
 \vy_m = \vartheta [(S A\vV_m)^* (S A\vV_m)]^{-1} (S A\vV_m)^* S \vV_m \vy_m = \vartheta (S A\vV_m)^\dagger S \vV_m \vy_m.
\]
Thus, the sketched eigenproblem is}
\[
\widehat M \widehat\vy_m := [(SA\vV_m)^\dagger S\vV_m] \widehat\vy_m = \widehat\vartheta^{-1} \widehat\vy_m.
\]
Note that $\widehat M$ is the solution (of minimum Frobenius norm) to 
\begin{equation}
    \widehat M := \arg \min_{M\in\mathbb{C}^{(m+k)\times (m+k)}} \| S A \vV_m M - S\vV_m  \|_F,
    \label{eq:srr}
\end{equation}
a harmonic version of the sRR approach presented in \cite[Sec.~6.3]{NakatsukasaTropp21}. 

\rev{In the context of GMRES-SDR, assuming we have computed a Krylov basis $V_m$ of $\mathcal{K}_m(A,\vr_0)$ in one cycle, and we have an augmentation basis $U\in\mathbb{C}^{N\times k}$ accumulated over previous cycles, we then update the augmentation subspace $U$ using this sketched harmonic Ritz problem with 
$\vV_{m} = [U, V_{m}]$.   
}

To stabilize the harmonic Ritz extraction, we exploit a truncated SVD $S A \vV_m \approx \widehat U_\ell \Sigma_\ell \widehat V_\ell^*$ and replace
\[
S A \vV_m \rightarrow  \widehat U_\ell \quad \text{and} \quad S\vV_m \rightarrow (S\vV_m) \widehat V_\ell \Sigma_\ell^{-1},
\]
leading to the regularized 
\[
  \widehat M_\ell := \widehat U_\ell^* (S\vV_m) \widehat V_\ell \Sigma_\ell^{-1} \approx \widehat M. 
\]
We then compute an ordered QZ factorization such that
\[
Q [\widehat U_\ell^* (S \vV_m) \widehat V_\ell ] Z   = Q [ \Sigma_\ell ] Z 
\]
is upper triangular with the $k$ most relevant generalized eigenvalues appearing in the top-left block. As we are interested in harmonic Ritz values, we should target the \emph{largest modulus} eigenvalues. The desired harmonic Ritz vectors are then given as
\[
    U: = [U,V_m ] \widehat V_\ell Z(\,:\,,1:k).
\]
Similarly, the sketched version of $U$ and $AU$ can be computed without explicit sketches as shown in Algorithm~\ref{alg:gmres_sdr}.

\iffalse
\subsection{Other notes}
We compute
\[
SA V_{m+1} = SV_{m+1} \underline{H_m}
\]
where $\underline{H_m}$ is $t$-banded. Compute $S V_{m+1}=QR$, transform decomposition to get $\underline{\widehat H_m}$ which would be the matrix obtained by Balabanov/Grigori.

Condition number of $\vV_m$ matters numerically. Especially for the preconditioned examples we get large basis condition numbers (close to $1/u$) and then the harmonic Ritz extraction appears to degrade.
\fi

\subsection{Solving slowly varying linear systems}\label{sec:slow} So far, we have introduced GMRES-SDR for a single linear system $A\vx = \vb$. Let us now consider the cases that $A\vx^{(i)} = \vb^{(i)}$ with varying right-hand side vectors, and $A^{(i)}\vx^{(i)} = \vb^{(i)}$ with varying vectors and system matrices.

In the first case, when only $\vb^{(i)}$ varies, we can simply reuse the recycling subspace $U, SU, SAU$ from one problem to the next. We just take the outputs $U, SU, SAU$ from Algorithm~\ref{alg:gmres_sdr} computed during the solution of $A\vx^{(i)} = \vb^{(i)}$ and feed them as inputs to Algorithm~\ref{alg:gmres_sdr} when solving $A\vx^{(i+1)} = \vb^{(i+1)}$.

The second case with problem-dependent system matrices, $A^{(i)}\vx^{(i)} = \vb^{(i)}$, allows for two variants which we refer to \emph{exact} and \emph{inexact}  GMRES-SDR, respectively. In the exact variant we reuse $U, SU$ from one problem $i$ to problem $i+1$, but compute $S A^{(i+1)} U$ by explicit matrix multiplication and sketching. This requires $k$ additional matrix-vector products and the sketching of $k$ additional vectors for each new problem. 

In the inexact GMRES-SDR variant we  feed $SA^{(i)} U$ as an input to Algorithm~\ref{alg:gmres_sdr} even though we are now solving $A^{(i+1)}\vx^{(i+1)} = \vb^{(i+1)}$ with a different system matrix. As a consequence, the matrix $SAW$ computed in line~18 of Algorithm~\ref{alg:gmres_sdr} becomes
\[
SAW = [S A^{(i)} U, SA^{(i+1)} V(:,1:j)],
\]
i.e., we are \rev{mixing (sketched) subspaces associated with with different matrices $A^{(i)}$ and $A^{(i+1)}$.} In particular, the least squares problem in line~21 no longer corresponds to solving a sketched version of the full GMRES problem for $A^{(i+1)}\vx^{(i+1)} = \vb^{(i+1)}$ and the residual relations~\eqref{eq:reschain} are violated. Still, we have observed numerically that one can sometimes get away with the inexactness when $A^{(i)}$ changes only very slightly from one problem to the next. However, there is no guarantee that this works and the sketched residuals may even increase as the iteration progresses. We will demonstrate this in \Cref{sec:cd}.

\section{Analysis of sketched GMRES}\label{section:sketched-gmres-analysis}
We build on the one-cycle description of sketched GMRES as introduced in~\cite{NakatsukasaTropp21}.
    They describe a general matrix $ V_m\in\mathbb{C}^{N\times m}$  whose columns are some basis vectors of a Krylov subspace, i.e., $\rng(V_m) = \mathcal{K}_m(A,\vr_0)$.  We extend existing GMRES convergence theory to the sketched GMRES setting, separating into basis-agnostic and basis-dependent results. Then in \Cref{section:sketched-aug-gmres-analysis} we put sketched augmented GMRES into the existing framework of augmented iterative methods.  This allows us to extend the analysis for sketched GMRES to augmented sketched GMRES.

    \subsection{Sketching the constraint space for one cycle of sketched GMRES}
    We pick up from the discussion in \Cref{section:GMRES-classic} and take a similar approach to understand the constraint for sketched GMRES. For the same basis $ V_m$, the sketched GMRES approximation is defined as
    \begin{align*}
        \widetilde{\vx}_m =  \vx_0 +  V_m\widetilde{\vy}_m, 
    \end{align*}
    where $\widetilde{\vy}_m$ minimizes $\left\|  S( A V_m\vy - \vr_0) \right\|$, 
    and via the normal equations, it follows that 
    \begin{align*}
        \widetilde{\vy}_m
        \rev{
        =
        \left(S A V_m\right)^\dagger S \vr_0
        }
        =
        \left(  V_m^\ast A^\ast S^\ast S A V_m \right)^{\dagger}
        \left(  V_m^\ast A^\ast S^\ast S \right)\vr_0.
    \end{align*}
    %(We establish in Lemma~\ref{lemma:invertible-sketched-gram} below that $V_m^\ast A^\ast S^\ast S A V_m$ is invertible for a sufficiently small sketching parameter~$\varepsilon$.) 
    \rev{
    We note that if the columns of $S A V_m$ are linearly independent, then the associated Gram matrix is invertible; i.e., 
    \begin{align*}
        \left(  V_m^\ast A^\ast S^\ast S A V_m \right)^{\dagger}
        =
        \left(  V_m^\ast A^\ast S^\ast S A V_m \right)^{-1}.
    \end{align*}
    We often assume this as a property of the bases we sketch throughout this paper.
    }
    The sketched GMRES residual satisfies the relation
    \begin{align*}
        \widetilde{\vr}_m 
        = &
        \rev{\vr_0 -  A V_m\left(S A V_m\right)^\dagger S \vr_0 }
        = 
        \left( I - \widehat{\Phi}_{\CK_m} \right)\vr_0, 
        \\
        \rev{
        \ \ \mbox{where}\ \ 
        }
        \widehat{\Phi}_{\CK_m} 
        =&  
        \rev{
        A V_m\left(S A V_m\right)^\dagger S.
        }
    \end{align*}
    Similarly, if we let 
    \begin{align*}
        \widehat{\Pi}_{\CK_m} 
        =  
        \rev{
        V_m\left(S A V_m\right)^\dagger S A,
        }
    \end{align*}
     the sketched GMRES approximation satisfies $\hat{\vx}_m = \vx_0 + \widehat{\Pi}_{\CK_m}\veta_0$.
    We observe that
    $ S\widehat{\Phi}_{\CK_m} = \Psi_{S A\CK_m} S$, where $\Psi_{S A\CK_m}$
    is the orthogonal projector onto $\rng\left(  S A V_m \right)$, meaning that the sketched residual satisfies $ S\hat{\vr}_m\perp\rng\left(  S A V_m \right)$.  This is easy to see since we solved a sketched least squares problem.

    \subsection{Sketched GMRES is a projection method} 
    
    The matrices $\widehat{\Pi}_{\CK_m}$ and $\widehat{\Phi}_{\CK_m}$ serve the same role as the error and residual projectors in GMRES.  We \rev{show that they are indeed} projectors.  This builds on ideas in, e.g., \cite{balabanov2019randomized,palitta2023sketched}.  Note that in the following discussion, the space spanned by the columns of $ S^\ast S A V_m$ is discussed.  Since $ S$ sketches the true GMRES constraint space, the space must be lifted or interpolated back up to the full space in order to be used as a sketched residual constraint.  The operator $ S^\ast$ lifts the sketched space back to a subspace in $n$ dimensions.
    \begin{lemma}
        The matrices $\widehat{\Pi}_{\CK_m}$ and $\widehat{\Phi}_{\CK_m}$ are \rev{both projectors}, with $\rnga{\widehat{\Phi}_{\CK_m}}\subseteq\rng( A V_m)$ with null space containing $\rng( S^\ast S A V_m)^\perp$ and $\rnga{\widehat{\Pi}_{\CK_m}}\subseteq\rng( V_m)$ with null space containing $\rng( A^\ast S^\ast S A V_m)^\perp$.
    \end{lemma}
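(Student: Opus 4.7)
The plan is to verify the claim directly using the defining identity $M M^\dagger M = M$ of the Moore--Penrose pseudoinverse, specialized to $M = SAV_m$. For the idempotency of $\widehat{\Phi}_{\CK_m} = AV_m(SAV_m)^\dagger S$, I would form $\widehat{\Phi}_{\CK_m}^{\,2}$, observe that the inner product $S\cdot AV_m$ reassembles into $SAV_m$ sandwiched between the two copies of $(SAV_m)^\dagger$, and then collapse the product via
\[
(SAV_m)^\dagger (SAV_m)(SAV_m)^\dagger = (SAV_m)^\dagger
\]
to recover $\widehat{\Phi}_{\CK_m}$. The same cancellation, with $SA\cdot V_m = SAV_m$ now being the sandwiched middle factor, yields $\widehat{\Pi}_{\CK_m}^{\,2} = \widehat{\Pi}_{\CK_m}$.

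The two range inclusions are immediate: $\widehat{\Phi}_{\CK_m}$ factors on the left through $AV_m$, and $\widehat{\Pi}_{\CK_m}$ factors on the left through $V_m$, so any image vector lies in $\rng(AV_m)$ or $\rng(V_m)$, respectively.

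For the two null space inclusions, I would invoke the remark immediately preceding the lemma: under the assumption that the columns of $SAV_m$ are linearly independent, the pseudoinverse has the closed form
\[
(SAV_m)^\dagger = \left( V_m^\ast A^\ast S^\ast SAV_m \right)^{-1} V_m^\ast A^\ast S^\ast.
\]
For any $\vx \in \rng(S^\ast SAV_m)^\perp$, orthogonality against every column of $S^\ast SAV_m$ is exactly the statement $V_m^\ast A^\ast S^\ast S\vx = 0$, which annihilates the outer factor of $(SAV_m)^\dagger$ acting on $S\vx$ and hence forces $\widehat{\Phi}_{\CK_m}\vx = 0$. The analogous argument for $\widehat{\Pi}_{\CK_m}$ uses $\vx \in \rng(A^\ast S^\ast SAV_m)^\perp$ to kill $V_m^\ast A^\ast S^\ast SA\vx$ and thus $(SAV_m)^\dagger SA\vx$.

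I do not foresee a substantive obstacle. The only care needed is to keep the adjoints straight when reading off $\rng(\cdot)^\perp$ as the kernel of the conjugate transpose, and to make explicit at the null space step that the full column-rank hypothesis on $SAV_m$ is precisely what allows substitution of the closed-form expression for $(SAV_m)^\dagger$ in place of the abstract Moore--Penrose symbol.
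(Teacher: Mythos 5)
Your proposal is correct and follows essentially the same route as the paper: idempotency via the Moore--Penrose identity applied to $SAV_m$, ranges read off from the left factors $AV_m$ and $V_m$, and null spaces from annihilation of the right factor $V_m^\ast A^\ast S^\ast$. The only cosmetic difference is that you invoke the full column-rank closed form of $(SAV_m)^\dagger$ for the null space step, which is not strictly needed, since $\ker\left((SAV_m)^\dagger\right)=\rng(SAV_m)^\perp$ gives the same containment for a general pseudoinverse, matching the paper's remark about the rank-deficient case.
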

    \begin{proof}
        Idemptoency can be proven directly.  For $\widehat{\Phi}_{\CK_m}$, we can write
        \rev{
        \begin{align*}
            \widehat{\Phi}_{\CK_m}^2 
            &=
             A V_m\left(S A V_m\right)^\dagger S
             A V_m\left(S A V_m\right)^\dagger S
            \\
            &=
             A V_m\left(S A V_m\right)^\dagger S.
        \end{align*}
        which follows from one of the defining properties of the Moore--Penrose pseudoinverse. %
        }%
        Similarly for $\widehat{\Pi}_{\CK_m}$,
        \rev{
        \begin{align*}
            \widehat{\Pi}_{\CK_m}^2
            &=
             V_m\left(S A V_m\right)^\dagger S A
             V_m\left(S A V_m\right)^\dagger S A
            \\
            &=
             V_m\left(S A V_m\right)^\dagger S A
            =
            \widehat{\Pi}_{\CK_m}.
        \end{align*}
        }
        The range and null space containment assertions follow directly from the definitions of $\widehat{\Pi}_{\CK_m}$ and $\widehat{\Phi}_{\CK_m}$.  However, we note that if 
        \rev{the columns of $S A V_m$ are not linearly independent so that}
        $\left(  V_m^\ast A^\ast S^\ast S A V_m \right)^{\dagger}$ is not a true inverse, then it has as its null space $\rng\left(  V_m^\ast A^\ast S^\ast S A V_m \right)^\perp$. Furthermore, $S$ itself has a  \rev{null space of dimension at least $N-s$.}  \rev{Thus, the ranges and null spaces of these projectors fluctuate depending on the choice of sketching operator $S$}.
    \end{proof}
    \subsection{Semi-inner product induced by $ S^\ast  S $}
    How shall we understand these projectors $\widehat{\Pi}_{\CK_m}$ and $\widehat{\Phi}_{\CK_m}$?  Consider the fact that if $ S^\ast  S$ were nonsingular, it would induce an inner product.  In this case, one can easily show that these could be characterized with  $\widehat{\Pi}_{\CK_m}$ being an orthogonal projector onto $\rng( V_m)$ with respect to the $ A^\ast S^\ast S A$-inner-product, and $\widehat{\Phi}_{\CK_m}$ being an orthogonal projector onto $\rng( A V_m)$ with respect to the $ S^\ast S$-inner-product. This would simply be the method called often in the literature \emph{weighted GMRES}\cite{Essai:1998:weighted-gmres}.

    However, $ S^\ast S$ is \emph{necessarily} singular in the case of sketching.  It is thus pointed out in \cite{balabanov2019randomized} that it induces a nonnegative bilinear form satisfying all the axioms of inner products except that $\vw^\ast S^\ast S\vw = 0$ does not necessarily imply that $\vw=\mathbf{0}$.  Of course, because of the nature of the sketching process, this is not surprising.  Thus, sketched GMRES may be understood as an extreme version of weighted GMRES \cite{Essai:1998:weighted-gmres} wherein most of the weights \rev{(i.e., the eigenvalues of the matrix $S^\ast S$ inducing the bilinear form)} are actually zero, and the nonzero weights have been chosen such that the weighted semi-inner product still acts as an inner product on the correction and constraint spaces. 

    It is observed in \cite{balabanov2019randomized} that the bilinear form induced by positive semi-definite $ S^\ast S$ is called a \emph{semi-inner product} or pseudo-inner product.  Let us denote the semi-inner product by $\langle \vu,\vw \rangle_{ S^\ast S} := \vw^\ast S^\ast S\vu$.  In \cite{palitta2023sketched}, the authors point out further that the sketching assumption \eqref{eq:sketch} means that this semi-inner product acts as an inner product on the subspace in question.

    \subsection{Relationship between residual angle and sketched constraint space}
    In \cite{EiermannErnst2001}, the authors explored many aspects of the geometric properties of quantities and subspaces generated by Krylov subspace iterations, in the context of understanding aspects of convergence.  We now extend some of these results to the sketched GMRES setting.  Much work is done concerning geometric aspects of minimum residual methods such as GMRES, and we focus mainly on extending the ideas used to develop \cite[Theorem 4.5]{EiermannErnst2001} and results subsequent to that.

    We denote by $\ang{\vv}{\vu}$ the angle between vectors $\vv$ and $\vu$, and by $\ang{\vv}{\mathcal{W}}$ the principal angle between the vector $\vv$ and the subspace $\mathcal{W}$.  The authors of \cite{EiermannErnst2001} make use of the observation (true for any orthogonal projector onto a subspace) that 
    \begin{align*}
        \left\| 
            \Phi_{\CK_m}\vr_0
        \right\|
        =
        \left\| 
            \vr_0
        \right\|
        \cos
        \ang{\vr_0}{ A\CK_m}
        \ 
        \mbox{and,}
        \ 
        \left\| 
            \left(
                I - \Phi_{\CK_m}
            \right)
            \vr_0
        \right\|
        =
        \left\| 
            \vr_0
        \right\|
        \sin
        \ang{\vr_0}{ A\CK_m},
    \end{align*}
    This identity does not necessarily hold for sketched projectors, but the residual projector $\Phi_{\CK_m}$ can be related to a true projector acting in the sketched constraint space.  This can be used to connect us back to the theory used in \cite[Section 4]{EiermannErnst2001}, when acting on vectors from the sketched subspace, specifically $\vr_0$.
    \begin{lemma}
        For sketched GMRES, it holds for the sketched residual projection 
        \begin{align}
            \left\| 
            \widetilde{\vr}_m
        \right\|
        _{ S^\ast S}
        =
            \left\| 
            \left(
                I - \widehat{\Phi}_{\CK_m}
            \right)
            \vr_0
        \right\|
        _{ S^\ast S}
        =
        \left\| 
             S\vr_0
        \right\|
        \sin
        \ang{ S\vr_0}{ S A\CK_m}.
        \label{eq:orth-proj-trig}
        \end{align}
    \end{lemma}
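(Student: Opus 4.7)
The plan is to prove the claim in two steps, corresponding to the two equalities. The first equality is simply the definition of the sketched residual established earlier in the section: $\widetilde{\vr}_m = (I - \widehat{\Phi}_{\CK_m})\vr_0$, so nothing is required there beyond invoking that identity.

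For the second equality, the key observation is the intertwining relation $S\widehat{\Phi}_{\CK_m} = \Psi_{SA\CK_m} S$, which was pointed out immediately after the definition of $\widehat{\Phi}_{\CK_m}$ and which turns the sketched projector into an honest orthogonal projector once we apply $S$ on the left. My plan is to first convert the seminorm into a Euclidean norm by writing
\begin{equation*}
\left\| \left( I - \widehat{\Phi}_{\CK_m} \right)\vr_0 \right\|_{S^\ast S}^2
=
\left( \left( I - \widehat{\Phi}_{\CK_m} \right)\vr_0 \right)^\ast S^\ast S \left( I - \widehat{\Phi}_{\CK_m} \right)\vr_0
=
\left\| S\left( I - \widehat{\Phi}_{\CK_m} \right)\vr_0 \right\|^2,
\end{equation*}
and then use the intertwining relation to rewrite $S(I - \widehat{\Phi}_{\CK_m})\vr_0 = (I - \Psi_{SA\CK_m}) S\vr_0$.

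The last step is to apply the standard trigonometric identity for orthogonal projectors (the same one that is invoked for $\Phi_{\CK_m}$ in the sentence preceding the lemma): since $\Psi_{SA\CK_m}$ is a true orthogonal projector onto $\rng(SAV_m) = SA\CK_m$, it satisfies
\begin{equation*}
\left\| \left( I - \Psi_{SA\CK_m} \right) S\vr_0 \right\|
=
\left\| S\vr_0 \right\| \sin \theta\!\left( S\vr_0,\, SA\CK_m \right).
\end{equation*}
Chaining these equalities gives exactly the stated identity.

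I do not anticipate any real obstacle; the result is essentially a bookkeeping exercise showing that applying $S$ to the sketched problem reduces it to an orthogonal projection in the sketched space, where the familiar angle-based formula of \cite{EiermannErnst2001} applies. The only thing worth being careful about is that the intertwining relation $S\widehat{\Phi}_{\CK_m} = \Psi_{SA\CK_m} S$ requires that the columns of $SAV_m$ be treated through the Moore--Penrose pseudoinverse correctly; under the standing assumption that the columns of $SAV_m$ are linearly independent (stated earlier in the section) this is immediate, and the proof needs no further conditions.
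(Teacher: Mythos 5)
Your proposal is correct and follows essentially the same route as the paper: the paper's proof also invokes the intertwining relation $S\widehat{\Phi}_{\CK_m} = \Psi_{SA\CK_m} S$ to rewrite $\| S(I - \widehat{\Phi}_{\CK_m})\vr_0\| = \|(I - \Psi_{SA\CK_m})S\vr_0\|$ and then applies the standard angle identity for the orthogonal projector onto $SA\CK_m$. Your additional remark about the pseudoinverse and the linear-independence assumption on the columns of $SAV_m$ is consistent with the paper's standing assumptions and adds nothing problematic.
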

    \begin{proof}
        This follows directly from the fact that $ S\widehat{\Phi}_{\CK_m} = \Psi_{SA\CK_m} S$, where $\Psi_{SA\CK_m}$ is the orthogonal projector acting in the sketched space, projecting onto $ S A\CK_m( A,\vr_0)$.  From this the result follows directly since
        \begin{align*}
            \left\| 
             S
            \left(
                I - \widehat{\Phi}_{\CK_m}
            \right)
            \vr_0
        \right\|
        =
        \left\| 
            \left(
                I - \Psi_{SA\CK_m}
            \right)
             S\vr_0
        \right\|,
        \end{align*}
        to which we can then apply \eqref{eq:orth-proj-trig}.
    \end{proof}
    Notice that we can consider developing bounds on the true residual in terms of sketched angles.  To make use of this, we first relate trigonometric functions of sketched angles back to their unsketched counterparts.
    \rev{
    \begin{lemma}\label{lemma:sketched-trig-bounds}
        Let $\vu$ and $\vv$ satisfy the sketching assumption \eqref{eq:sketch} for $ S$.  Then it holds that
        \begin{align}
            \dfrac
            {\cos\ang{\vu}{\vv}-\varepsilon}
            {\left( 1+\varepsilon \right)}
            \leq&
            \cos\ang{ S\vu}{ S\vv}
            \leq
            \dfrac
            {\cos\ang{\vu}{\vv}+\varepsilon}
            {\left( 1-\varepsilon \right)},
            \qquad
            \mbox{and}
            \nonumber
            \\
            \dfrac{
                \sqrt{
                    \sin^2\ang{\vu}{\vv} 
                    -
                    2\varepsilon(1 - \cos\ang{\vu}{\vv})
                }
             }
             {
                (1-\varepsilon)
             }
             \leq&
             \sin\ang{ S\vu}{ S\vv}
            \leq
            \dfrac{
                \sqrt{
                    \sin^2\ang{\vu}{\vv} 
                    +
                    2\varepsilon(1 + \cos\ang{\vu}{\vv})
                }
            }
            {
                (1+\varepsilon)
            }.
            \label{eq:sketched-trig-bounds}
        \end{align}
    \end{lemma}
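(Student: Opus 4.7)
The plan is a two-step reduction: first establish two-sided bounds on $\cos\theta(Su,Sv)$ by combining the inner-product form of the sketching assumption with the norm form, then derive the sine bounds from the Pythagorean identity $\sin^2\theta = 1 - \cos^2\theta$ (using the acute-angle convention so that $\cos\theta \in [0,1]$). Under this convention, the sine bounds are entirely determined by the cosine bounds, so the only real work is in the cosine step plus an algebraic factorization.

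For the cosine bounds, I would apply \eqref{eq:sketch_innerproduct} directly to the pair $(u,v)$, giving $|\langle Su,Sv\rangle - \langle u,v\rangle|\leq \varepsilon\|u\|\|v\|$, and dividing by $\|u\|\|v\|$ yields
$\cos\theta(u,v)-\varepsilon \leq \langle Su,Sv\rangle/(\|u\|\|v\|)\leq \cos\theta(u,v)+\varepsilon.$
Separately, applying \eqref{eq:sketch} to $u$ and $v$ and multiplying the square roots gives $(1-\varepsilon)\|u\|\|v\|\leq \|Su\|\|Sv\|\leq(1+\varepsilon)\|u\|\|v\|$ (valid since $\varepsilon<1$). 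Combining numerator and denominator bounds — with care for the sign of the numerator so that squaring and inequality direction are preserved — produces the stated $\cos\theta$ estimates
$\dfrac{\cos\theta(u,v)-\varepsilon}{1+\varepsilon}\leq \cos\theta(Su,Sv)\leq\dfrac{\cos\theta(u,v)+\varepsilon}{1-\varepsilon}.$

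For the sine bounds, I would square the corresponding cosine bound and subtract from $1$, applying the difference-of-squares factorization
\[
(1\pm\varepsilon)^2-(\cos\theta(u,v)\mp\varepsilon)^2 = \bigl((1\pm\varepsilon)-(\cos\theta(u,v)\mp\varepsilon)\bigr)\bigl((1\pm\varepsilon)+(\cos\theta(u,v)\mp\varepsilon)\bigr),
\]
which regroups, via $\sin^2\theta(u,v)=(1-\cos\theta(u,v))(1+\cos\theta(u,v))$, into the claimed closed-form expression under the square root. Taking square roots (with a trivial lower bound of $0$ wherever the radicand would be negative, e.g.\ when $\cos\theta(u,v)<\varepsilon$) yields the stated inequalities.

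The main obstacle I anticipate is bookkeeping around the sign inside the square root of the lower bound on $\sin\theta(Su,Sv)$: the naive factorization of $(1-\varepsilon)^2-(\cos\theta(u,v)+\varepsilon)^2$ gives $\sin^2\theta(u,v)-2\varepsilon(1+\cos\theta(u,v))$, whereas the statement has $1-\cos\theta(u,v)$ in the corresponding term. I would verify carefully whether a tighter route via the $S^\ast S$ semi-inner product on the specific test vector $\hat u-\cos\theta(u,v)\hat v$ (which has unit-norm decomposition orthogonal to $\hat v$ before sketching) recovers the stronger form, or whether the form in \eqref{eq:sketched-trig-bounds} should be corrected to $1+\cos\theta(u,v)$ to be consistent with the direct cosine bound. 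The upper bound on $\sin\theta(Su,Sv)$ is straightforward since the analogous factorization of $(1+\varepsilon)^2-(\cos\theta(u,v)-\varepsilon)^2$ produces exactly $\sin^2\theta(u,v)+2\varepsilon(1+\cos\theta(u,v))$.
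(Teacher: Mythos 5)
Your route is the same as the paper's: the cosine bounds come from dividing the inner-product form of the embedding property by $\|\vu\|\,\|\vv\|$ and combining with the two-sided norm bounds, and the sine bounds follow by squaring, subtracting from one, and clearing the denominators $(1\pm\varepsilon)^2$ (the paper expands the resulting numerators where you factor them; the algebra is identical). The discrepancy you flagged is real, and your version is the correct one: $(1-\varepsilon)^2-\left(\cos\ang{\vu}{\vv}+\varepsilon\right)^2=\sin^2\ang{\vu}{\vv}-2\varepsilon\left(1+\cos\ang{\vu}{\vv}\right)$, so the lower sine bound in \eqref{eq:sketched-trig-bounds} should carry $1+\cos\ang{\vu}{\vv}$; the $1-\cos\ang{\vu}{\vv}$ printed there (and in the final display of the paper's own proof, which performs exactly this expansion) is a sign slip, not the result of a sharper argument. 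No tighter route recovers the printed form: for nearly parallel $\vu,\vv$ it would force $\sin\ang{S\vu}{S\vv}>\sin\ang{\vu}{\vv}$, whereas an admissible embedding may shrink the sine, being guaranteed only $\sin\ang{S\vu}{S\vv}\geq\sqrt{(1-\varepsilon)/(1+\varepsilon)}\,\sin\ang{\vu}{\vv}$ (bound $\min_\alpha\|S(\vu-\alpha\vv)\|$ from below and $\|S\vu\|$ from above), a factor that can essentially be attained. Since \Cref{corollary:sketched-gmres-angle-bound} and \Cref{corollary:sketched-gmres-Arnoldi-givensNormUpdate} use only the upper sine bound, which your factorization reproduces exactly, nothing downstream is affected. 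Two minor caveats shared with the paper, which you partially anticipate: the corrected lower sine bound should be read as $0$ when the radicand is negative, and the upper sine bound implicitly requires $\cos\ang{\vu}{\vv}\geq\varepsilon$, since squaring the lower cosine bound needs it to be nonnegative (otherwise one only has the trivial bound $\sin\ang{S\vu}{S\vv}\leq 1$).
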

    \begin{proof}
        Dividing the inequality \eqref{eq:sketch_innerproduct} by $\left\|\vu\right\|\left\|\vv\right\|$ yields
        \begin{align}
            \cos\ang{\vu}{\vv}-\varepsilon
            \leq
            \dfrac
            {\left\| S\vu\right\|\left\| S\vv\right\|}
            {\left\|\vu\right\|\left\|\vv\right\|}
            \cos\ang{ S\vu}{ S\vv}
            \leq
            \cos\ang{\vu}{\vv}+\varepsilon.
            \label{eq:sketched-cos-ineq}
        \end{align}
        From the sketching assumption \eqref{eq:sketch}, we have the bound
        \begin{align}
            \dfrac
            {1}
            {\left( 1+\varepsilon \right)}
            \leq
            \dfrac
            {\left\|\vu\right\|\left\|\vv\right\|}
            {\left\| S\vu\right\|\left\| S\vv\right\|}
            \leq
            \dfrac
            {1}
            {\left( 1-\varepsilon \right)}
            \label{eq:recip-ineq-util}
        \end{align}
%        \rev{SG: I think that's actually just
%        \begin{align}
%            \dfrac
%            {1}
%            {1+\varepsilon}
%            \leq
%            \dfrac
%            {\left\|\vu\right\|\left\|\vv\right\|}
%            {\left\| S\vu\right\|\left\| S\vv\right\|}
%            \leq
%            \dfrac
%            {1}
%            {1-\varepsilon }
%        \end{align}
%        }
        Multiplying \eqref{eq:sketched-cos-ineq} through by 
        $
            \dfrac
            {\left\|\vu\right\|\left\|\vv\right\|}
            {\left\| S\vu\right\|\left\| S\vv\right\|}
        $
        and applying \eqref{eq:recip-ineq-util} yields the first result in~\eqref{eq:sketched-trig-bounds}.

        To obtain the second result, we take the first result, square it, negate it (reversing inequality order) and add $1$ to it, yielding
        \begin{align*}
            1
            -
            \left(
                \dfrac
                {\cos\ang{\vu}{\vv} + \varepsilon}
                {\left(
                    1-\varepsilon
                \right)}
            \right)
            ^2
            \leq
            \sin^2\ang{ S\vu}{ S\vv}
            \leq
            1
            -
            \left(
                \dfrac
                {\cos\ang{\vu}{\vv} - \varepsilon}
                {\left(
                    1+\varepsilon
                \right)}
            \right)
            ^2.
        \end{align*}
         Getting everything over a common denominator yields
         \begin{align*}
             \dfrac{
                (1-\varepsilon)^2 
                -
                (\cos\ang{\vu}{\vv} + \varepsilon)^2
             }
             {
                (1-\varepsilon)^2
             }
             \leq
             \sin^2\ang{ S\vu}{ S\vv}
            \leq
            \dfrac{
                (1+\varepsilon)^2 
                -
                (\cos\ang{\vu}{\vv} - \varepsilon)^2
            }
            {
                (1+\varepsilon)^2
            }
         \end{align*}
         and expanding the numerators out yields
         \begin{align*}
             \dfrac{
                \sin^2\ang{\vu}{\vv} 
                -
                2\varepsilon(1 - \cos\ang{\vu}{\vv})
             }
             {
                (1-\varepsilon)^2
             }
             \leq
             \sin^2\ang{ S\vu}{ S\vv}
            \leq
            \dfrac{
                \sin^2\ang{\vu}{\vv} 
                +
                2\varepsilon(1 + \cos\ang{\vu}{\vv})
            }
            {
                (1+\varepsilon)^2
            }
         \end{align*}
        Taking square roots yields the second result from \eqref{eq:sketched-trig-bounds}.
    \end{proof}
    }
    \cref{lemma:sketched-trig-bounds} directly leads to a bound on the sketched GMRES residual norm.
    \begin{corollary}\label{corollary:sketched-gmres-angle-bound}
        The sketched GMRES residual $\widetilde{\vr}_m$ satisfies the subspace angle-based bound
        \rev{
        \begin{align}
            \left\|
                \widetilde{\vr}_m
            \right\|
            \leq&
            \left\|
                \vr_0
            \right\|
            \sqrt{
                \dfrac
                {
                    \sin^2\ang{\vr_0}{ A\CK_m\left( A,\vr_0\right)} +2\varepsilon(1 + \cos\ang{\vr_0}{ A\CK_m\left( A,\vr_0\right)})
                }
                {
                    1-\varepsilon^2
                }
            }
            \nonumber
            \\
            \leq&
            \left\|
                \vr_0
            \right\|
            \sqrt{
                \dfrac
                {
                    \sin^2\ang{\vr_0}{ A\CK_m\left( A,\vr_0\right)} +4\varepsilon
                }
                {
                    1-\varepsilon^2
                }
            }.
            \label{eq:sketched-residual-bound-estimate}
        \end{align}
        }
    \end{corollary}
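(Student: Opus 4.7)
The plan is to chain together three ingredients already established: the sketched-norm identity of the previous lemma, the sketching assumption \eqref{eq:sketch} applied to two specific vectors, and the trigonometric bounds of \cref{lemma:sketched-trig-bounds} applied to a cleverly chosen pair of vectors.

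First, I would invoke the preceding lemma to write
\[
    \| \widetilde{\vr}_m \|_{S^\ast S}^2 = \| S\vr_0 \|^2 \sin^2 \ang{ S\vr_0}{ S A\CK_m( A,\vr_0)}.
\]
Then, assuming (as throughout the paper) that $S$ is an $\varepsilon$-subspace embedding for $\spn( \vr_0) + A\CK_m( A,\vr_0)$, so in particular for $\spn(\vr_0) + \spn(\widetilde{\vr}_m)$, the lower half of \eqref{eq:sketch} gives $(1-\varepsilon)\|\widetilde{\vr}_m\|^2 \leq \|S\widetilde{\vr}_m\|^2 = \|\widetilde{\vr}_m\|_{S^\ast S}^2$, while the upper half gives $\|S\vr_0\|^2 \leq (1+\varepsilon)\|\vr_0\|^2$.

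Second, to pass from the sketched angle $\ang{S\vr_0}{SA\CK_m}$ to the unsketched one, I would let $\vw^\ast \in A\CK_m( A,\vr_0)$ be a vector realizing the principal angle, i.e., with $\sin\ang{\vr_0}{\vw^\ast} = \sin\ang{\vr_0}{A\CK_m( A,\vr_0)}$ and $\cos\ang{\vr_0}{\vw^\ast} = \cos\ang{\vr_0}{A\CK_m( A,\vr_0)}$. Since $S\vw^\ast \in SA\CK_m( A,\vr_0)$, the sine of the angle from $S\vr_0$ to the subspace is bounded by the sine of the angle to any individual vector in it, so
\[
    \sin^2\ang{S\vr_0}{SA\CK_m( A,\vr_0)} \leq \sin^2\ang{S\vr_0}{S\vw^\ast}.
\]
Applying the upper trigonometric bound of \cref{lemma:sketched-trig-bounds} to $\vu=\vr_0$ and $\vv = \vw^\ast$ (both of which lie in the embedded subspace) yields
\[
    \sin^2\ang{S\vr_0}{S\vw^\ast} \leq \frac{\sin^2\ang{\vr_0}{A\CK_m( A,\vr_0)} + 2\varepsilon\bigl(1+\cos\ang{\vr_0}{A\CK_m( A,\vr_0)}\bigr)}{(1+\varepsilon)^2}.
\]

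Chaining all of these gives
\[
    (1-\varepsilon)\|\widetilde{\vr}_m\|^2 \leq (1+\varepsilon)\|\vr_0\|^2 \cdot \frac{\sin^2\ang{\vr_0}{A\CK_m( A,\vr_0)} + 2\varepsilon\bigl(1+\cos\ang{\vr_0}{A\CK_m( A,\vr_0)}\bigr)}{(1+\varepsilon)^2},
\]
and dividing by $(1-\varepsilon)$ so that the denominator becomes $1-\varepsilon^2$ and taking square roots yields the first inequality of \eqref{eq:sketched-residual-bound-estimate}. The second, coarser, inequality then follows immediately by using $1+\cos \ang{\vr_0}{A\CK_m( A,\vr_0)} \leq 2$ to replace $2\varepsilon(1+\cos)$ with $4\varepsilon$. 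The main subtlety is ensuring that the sketching assumption is applied only to vectors within the subspace for which $S$ is assumed to embed, most notably justifying the transition to the unsketched norm of $\widetilde{\vr}_m$; this is handled by noting $\widetilde{\vr}_m \in \spn(\vr_0) + AV_m$ and recalling the standing embedding hypothesis on that space.
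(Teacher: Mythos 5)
Your proof is correct and follows essentially the same route as the paper: combine the identity $\|\widetilde{\vr}_m\|_{S^\ast S}=\|S\vr_0\|\sin\ang{S\vr_0}{SA\CK_m(A,\vr_0)}$ with the two sides of the sketching assumption \eqref{eq:sketch} and the sine bound of \cref{lemma:sketched-trig-bounds}, then simplify the denominator to $1-\varepsilon^2$ and use $1+\cos\leq 2$ for the coarser bound. Your only addition is the explicit reduction of the vector--subspace angle to a vector--vector angle via the minimizing vector $\vw^\ast$, a step the paper applies implicitly when invoking \cref{lemma:sketched-trig-bounds} for the subspace angle.
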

    %\rev{SG: check norms; is it $\| \cdot \|_{S^*S}$ on the left? Note that $\varepsilon$ may be in the order of, say, $0.5$, so $16\varepsilon$ gives no residual reduction}KMS No this should be the non-embedded norm.
    \begin{proof}
        Squaring $\left\|\widetilde{\vr}_m \right\|_{S^*S}$ and combining
         \eqref{eq:orth-proj-trig} with the left side of the sketching assumption \eqref{eq:sketch} for $\widetilde{\vr}_m$ yields 
         \rev{
        \begin{align*}
            \left(
                1-\varepsilon
            \right) 
            \left\|
                \widetilde{\vr}_m
            \right\|^2
            \leq
            \left\| 
                \widetilde{\vr}_m 
            \right\|^2_{S^*S}
            \leq
                \left\|
                     S\vr_0
                \right\|^2
                \sin^2\ang{ S\vr_0}{ S A\CK_m\left( A,\vr_0\right)}.
        \end{align*}
        
        Dividing through by $1-\varepsilon$ and squaring yields 
        \begin{align*} 
            \left\|
                \widetilde{\vr}_m
            \right\|^2
            \leq
            \dfrac{
                \left\|
                     S\vr_0
                \right\|^2
                \sin^2\ang{ S\vr_0}{ S A\CK_m\left( A,\vr_0\right)}
            }
            {
                (1-\varepsilon)
            }
        \end{align*}
        }%
        combining this with the right side of the sketching assumption \eqref{eq:sketch} and
        the right side of the second inequality in \eqref{eq:sketched-trig-bounds}  and taking square roots yields the first result.
        Bounding $\cos\ang{\vr_0}{ A\CK_m\left( A,\vr_0\right)}$ from above yields the second result.
    \end{proof}
    \rev{%
    We observe that the utility of this residual norm estimate varies with $\varepsilon$ and also with the value of the sine of the unsketched subspace angle $\sin\ang{\vr_0}{ A\CK_m\left( A,\vr_0\right)}$, which can be demonstrated by studying the relative difference between the first bound in \eqref{eq:sketched-residual-bound-estimate} and the true GMRES residual bound (i.e., when $\varepsilon=0$) for varying pairs $(\theta, \varepsilon)$; see \Cref{fig:subspace-angle-based-estimate}.
    In addition, we observe that this bound indicates that as true GMRES converges, we can expect sketched GMRES to similarly converge even for some larger values of $\varepsilon$.
    }%
    \begin{figure}[h]
        \centering
        \includegraphics[width=0.7\linewidth]{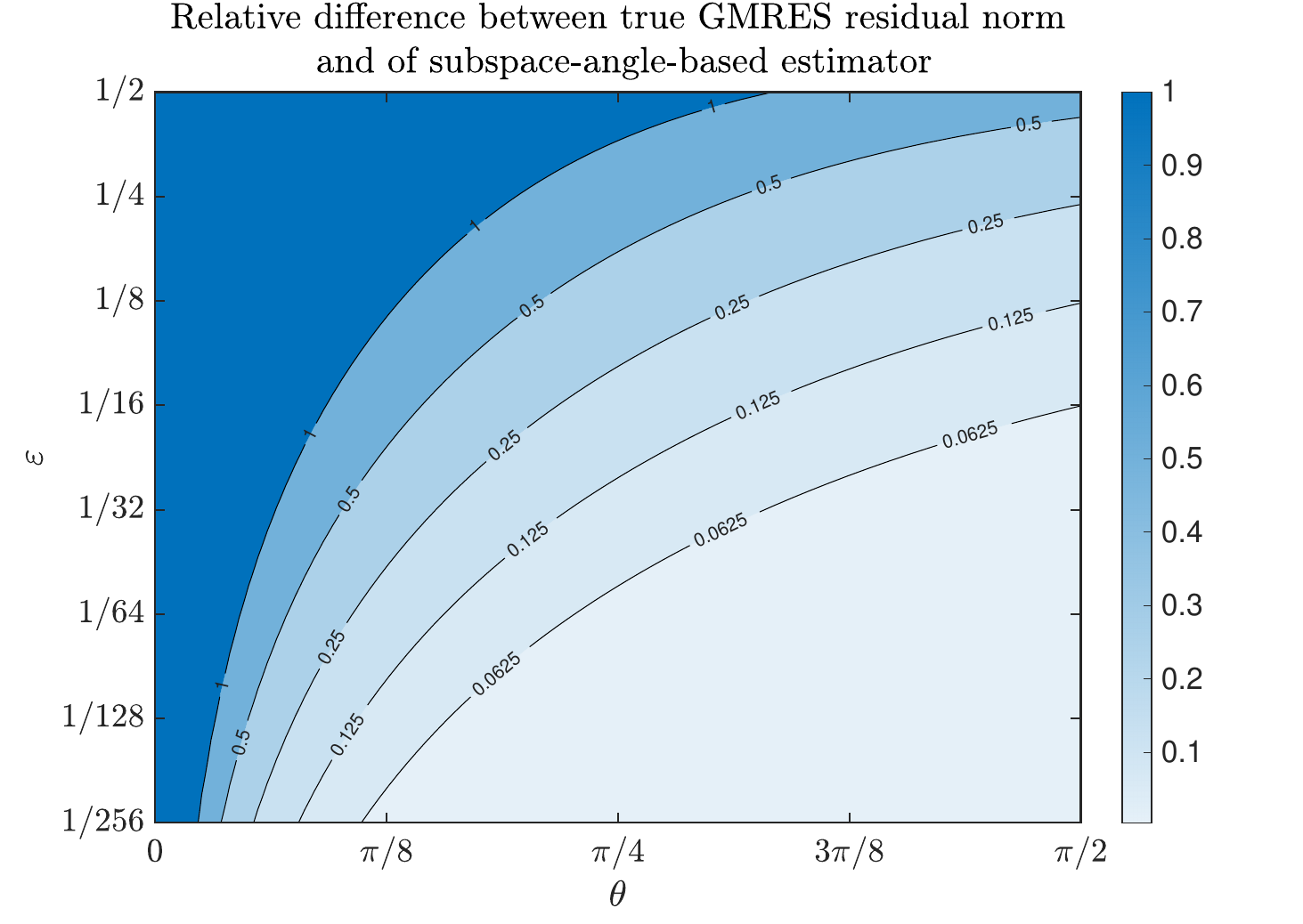}
        \caption{
        \rev{
        Contours of $d(\varepsilon, \theta)/\sin\theta$,
        the difference between the estimated sketched residual norm in the first bound from \eqref{eq:sketched-residual-bound-estimate}  
        and the true residual norm (for $\varepsilon=0)$, (relative to the size of $\sin\theta$); i.e., 
        $
        d(\varepsilon, \theta)
        =
        \sqrt{
            \dfrac
            {
                \sin^2\theta +2\varepsilon(1 + \cos\theta)
            }
            {
                1-\varepsilon^2
            }
        }
        -
        \sin\theta
        $.
        Contours are generated for $d(\varepsilon, \theta)/\sin\theta\in\left\lbrace 1/16, 1/8, 1/4, 1/2, 1\right\rbrace$, which should be understood as percentage relative difference.
        }
        }
        \label{fig:subspace-angle-based-estimate}
    \end{figure}
    \subsection{Analysis based on the sketched Arnoldi relation}
    We note that our basis $V_m$ for $\CK_m( A,\vr_0)$ satisfies 
    \begin{align}
        SA V_m
        =  
        SV_{m+1} \underline{H_m}
        \label{eq:truncated-sketched-Arnoldi}
    \end{align}
    where $\underline{H_m}$ is constructed progressively using a truncated Arnoldi orthogonalization with truncation parameter $t$; thus, is $t$-banded. Computing the economy QR decomposition $S V_{m+1}=\widehat{W}_{m+1}R_{m+1}$ and setting $\underline{\widehat{H}_m} = R_{m+1}\underline{H_m}$ (which is full upper Hessenberg) yields the sketched Arnoldi relation
    \begin{align}
        SA V_m 
        = 
        \widehat{W}_{m+1} \underline{\widehat{H}_m}.
        \label{eq:sketched-Arnoldi}
    \end{align}
    % As we have laid out in \Cref{sec:impl} building on what has been developed in \cite{NakatsukasaTropp21}, we are not building a basis for the Krylov subspace satisfying \eqref{eq:sketched-Arnoldi}. 
    \rev{
    \begin{remark}
        Obtaining the full upper Hessenberg (i.e., doing a full untruncated sketched Arnoldi) what is advocated for in, e.g., \cite{BalabanovGrigori22}, but is not the approach we take in this manuscript.  However, we note that sketched GMRES algorithms derived from \eqref{eq:sketched-Arnoldi} and \eqref{eq:truncated-sketched-Arnoldi} are mathematically equivalent.  Thus, we analyze the formulation of sketched GMRES derived from \eqref{eq:sketched-Arnoldi} to understand what theoretical behavior we can expect from our method.
    \end{remark}
    }

    Consider that the version of sketched GMRES derived from \eqref{eq:sketched-Arnoldi} minimizes the same functional,
    \begin{align}
        \widetilde{\vx}_m 
        &=
        \argmin{
            \vt\in\CK_m( A,\vr_0)
        }
        \norm{
             S\left(\vr_0 -  A\vt\right)
        }
        \\
        \nonumber
        \iff
        \widetilde{\vx}_m
        =
        V_m\widetilde{\vy}_m
        \ \ \mbox{where}\ \ 
        \widetilde{\vy}_m
        &=
        \argmin{
            \vy\in\mathbb{C}^m
        }
        \norm{
            \underline{\widehat H_m}\vy 
            -
            \norm{ S\vr_0}\ve_1
        }.
        \label{eq:sketched-gmres-arnoldi-minimization}
    \end{align}
    The functional that $\widetilde{\vy}_m$ minimizes is the standard GMRES functional, simply for the sketched problem.  \emph{This is important} because any analysis of GMRES with proofs proceeding from an analysis of the minimization of this functional can be mapped onto analogous results for sketched GMRES.  As with full GMRES, the minimization in \eqref{eq:sketched-gmres-arnoldi-minimization} can be solved by progressively QR-factorizing $\underline{\widehat{H}_m}$ using one Givens rotation per iteration, obtaining at step $m$, yielding $\underline{\widehat{H}_m} = {\underline{\widehat Q}_m}{\underline{\widehat R_m}}$.
    \begin{theorem}\label{theorem:givens-sine-sketched-gmres-angles}
        Let us write ${\underline{\widehat Q_m}} = \left(\underline{\hat{q}}_{ij}\right)\in\mathbb{C}^{(m+1)\times(m+1)}$.  Then it follows that 
        \begin{align*}
            \sin\ang{ S\vr_0}{ S A\CK_m( A,\vr_0)} 
            = 
            \left| \hat{q}_{m+1,1} \right|.
        \end{align*}
        Furthermore, it follows that
        \begin{align*}
            \sin\ang{ S\tilde{\vr}_{m-1}}{ S A\CK_m( A,\vr_0)} 
            =
            \hat{s}_m
        \end{align*}
        where $\hat{s}_m$ is the $m$-th Givens sine used for constructing the QR-factorization of $\underline{\widehat{H}_m}$.  The sketched residual norm satisfies $\norm{ S\tilde{\vr}_m}=|\hat{s}_m|\norm{ S\tilde{\vr}_{m-1}}$.
    \end{theorem}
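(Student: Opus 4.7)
The plan is to mimic the classical GMRES argument of Eiermann--Ernst applied to the sketched least squares problem in \eqref{eq:sketched-gmres-arnoldi-minimization}, exploiting the fact that $\widehat W_{m+1}$ has orthonormal columns and that $S\vr_0$ lies in $\rng(\widehat W_{m+1})$. The first move is to verify that $\widehat W_{m+1}^\ast S\vr_0 = \|S\vr_0\|\ve_1$. This follows because $S\vr_0 = \|\vr_0\|\, S\vv_1$ is (up to scaling) the first column of $S V_{m+1}$, and with the standard QR convention (positive diagonal of $R_{m+1}$), the first column of $\widehat W_{m+1}$ equals $S\vr_0/\|S\vr_0\|$. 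Using the sketched Arnoldi relation \eqref{eq:sketched-Arnoldi} and the isometry of $\widehat W_{m+1}$ on its column space, I would then rewrite
\begin{align*}
\| S\widetilde\vr_m \| = \min_{\vy}\big\| S\vr_0 - \widehat W_{m+1}\underline{\widehat H_m}\vy\big\| = \min_\vy \big\| \|S\vr_0\|\ve_1 - \underline{\widehat H_m}\vy\big\|,
\end{align*}
so the sketched GMRES residual is computed by the standard Hessenberg least squares problem of size $(m+1)\times m$.

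Next I plug in the QR decomposition $\underline{\widehat H_m} = \underline{\widehat Q_m}\,\underline{\widehat R_m}$. Because $\underline{\widehat R_m}$ has a zero last row, the unique minimum residual equals the absolute value of the last entry of $\underline{\widehat Q_m}^\ast \|S\vr_0\|\ve_1$, which is $|\hat{q}_{m+1,1}|\cdot\|S\vr_0\|$. Combining this with the identity $\|S\widetilde\vr_m\| = \|S\vr_0\|\sin\ang{S\vr_0}{SA\CK_m(A,\vr_0)}$ from \eqref{eq:orth-proj-trig} (using that $\|\cdot\|_{S^\ast S}$ agrees with the Euclidean norm of the sketched quantity) yields the first claim.

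For the Givens-sine identity, I would first establish the nestedness fact that $S\widetilde\vr_m = (I-\Psi_{SA\CK_m})\,S\widetilde\vr_{m-1}$. This holds because $SA\CK_{m-1}\subseteq SA\CK_m$ implies $\Psi_{SA\CK_m}\Psi_{SA\CK_{m-1}}=\Psi_{SA\CK_{m-1}}$, so $(I-\Psi_{SA\CK_m})(I-\Psi_{SA\CK_{m-1}})=(I-\Psi_{SA\CK_m})$. Consequently, $\|S\widetilde\vr_m\| = \|S\widetilde\vr_{m-1}\|\sin\ang{S\widetilde\vr_{m-1}}{SA\CK_m(A,\vr_0)}$. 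The remaining task is to show $\|S\widetilde\vr_m\| = |\hat s_m|\cdot\|S\widetilde\vr_{m-1}\|$. For this I would proceed by induction on $m$, tracking how applying the first $m-1$ Givens rotations to $\|S\vr_0\|\ve_1$ yields a vector whose $m$-th entry equals $\|S\widetilde\vr_{m-1}\|$ (by the first claim applied at step $m-1$); then the $m$-th Givens rotation mixes rows $m$ and $m+1$, sending this entry to $\hat c_m\|S\widetilde\vr_{m-1}\|$ in row $m$ and $\hat s_m\|S\widetilde\vr_{m-1}\|$ in row $m+1$, so the minimum residual of the augmented problem is $|\hat s_m|\cdot\|S\widetilde\vr_{m-1}\|$. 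Dividing then yields the claimed sine identity.

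The main obstacle I anticipate is purely bookkeeping: carefully ensuring that the reduction to the standard Hessenberg minimization is valid (in particular that $S\vr_0$ really lies in $\rng(\widehat W_{m+1})$ so no component is discarded) and correctly identifying the entries in the Givens update. None of the steps seems to require genuinely new ideas beyond those already used in \cite{EiermannErnst2001} for classical GMRES — the proof essentially recognizes that \eqref{eq:sketched-gmres-arnoldi-minimization} \emph{is} a standard GMRES least squares problem on the sketched side, so the classical relationships between Givens rotations, the last subdiagonal entry of $\underline{\widehat Q_m}$, and the residual carry through verbatim, provided the link back to the unsketched subspace angles is supplied by \eqref{eq:orth-proj-trig}.
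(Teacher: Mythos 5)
Your proposal is correct and follows essentially the same route as the paper: the paper's proof simply notes that, via the sketched Arnoldi relation \eqref{eq:sketched-Arnoldi} and the orthonormality of the columns of $\widehat{W}_{m+1}$, one has $\ang{S\vr_0}{SA\CK_m(A,\vr_0)} = \ang{\ve_1}{\rnga{\underline{\widehat{H}_m}}}$, and then defers the rest step-for-step to the proof of \cite[Theorem 4.5]{EiermannErnst2001}, which is precisely the argument you spell out (reduction to the small Hessenberg least squares problem, the relevant entry of the Q-factor, projector nestedness, and the Givens-rotation recursion). The only caveat is the transpose/indexing convention for the entry of $\underline{\widehat{Q}_m}$ (and the phase of the rotated right-hand-side entry), which is inherited from the theorem statement itself and does not affect the substance of the argument.
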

    %\rev{SG: Isn't the last statement alone $\norm{ S\tilde{\vr}_m}=|s_m|\norm{ S\tilde{\vr}_{m-1}}$ already a useful insight for residual estimation? It's enough to quickly estimate the sketched residual as the true residual is at most a factor $(1-\varepsilon)^{-1/2}$ larger. 
   % Wouldn't we need a QR of $S\vV_m$ to get the Givens angles? Wouldn't this be as costly as simply computing the sketched residual norm?}
    \begin{proof}
        We note that 
        \begin{align*}
            \ang{ S\vr_0}{ S A\CK_m( A,\vr_0)}
            =
            \ang{\ve_1}{\rnga{\underline{\widehat{H}_m}}}.
        \end{align*}
        The rest follows step-for-step the proof of \cite[Theorem 4.5]{EiermannErnst2001} and analysis thereafter.
    \end{proof}
    \rev{
        We note that this is a useful insight for residual estimation.  It allows one to cheaply and quickly estimate the sketched residual as the true residual is at most a factor $(1-\varepsilon)^{-1/2}$ larger.
        Going further, if one were to implement sketched GMRES using \eqref{eq:sketched-Arnoldi} following \cite{BalabanovGrigori22}, one could use the Givens sine at each iteration $m$ to estimate the residual using bounds such as in \Cref{lemma:sketched-trig-bounds} and \Cref{corollary:sketched-gmres-angle-bound}.  This is of \emph{theoretical interest} but would in actuality be too expensive to implement in the setting of the sketched GMRES variants being considered in this paper.
    }
    %\rev{TO-DO: KMS--Need to correct the maths on this, same as earlier, but we can discuss if it seems not useful.  i would argue it is theoretically interesting but maybe the practical use part can be removed. SG: I think it's theoretically interesting but we do not need to sell it as a practically useful tool.} 

    \begin{corollary}\label{corollary:sketched-gmres-Arnoldi-givensNormUpdate}
        For the implementation of sketched GMRES using \eqref{eq:sketched-Arnoldi} following \cite{BalabanovGrigori22}, the true residual norm admits the estimate satisfying the progressive updating scheme using the 
        \rev{
        sines of the unsketched subspace angles, namely
        \begin{align}
            \norm{\tilde{\vr}_m} 
            \leq 
            \sqrt{
            \dfrac
                {s_m^2 + 2\varepsilon(1+c_m)}
                {1-\varepsilon^2}
            }
            \norm{\tilde{\vr}_{m-1}},
            \label{eq:sketched-gmres-Arnoldi-givensNormUpdate}
        \end{align}
        where $s_m = \sin\ang{ \tilde{\vr}_{m-1}}{ A\CK_m( A,\vr_0)} $ and $c_m = \cos\ang{ \tilde{\vr}_{m-1}}{ A\CK_m( A,\vr_0)} $.
        }
    \end{corollary}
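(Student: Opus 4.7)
The plan is to chain together three facts already in hand: the sketching norm inequality \eqref{eq:sketch}, the Givens-sine residual identity $\|S\tilde{\vr}_m\| = |\hat{s}_m|\, \|S\tilde{\vr}_{m-1}\|$ from \Cref{theorem:givens-sine-sketched-gmres-angles}, and the upper bound on sines of sketched subspace angles from \Cref{lemma:sketched-trig-bounds}. The aim is to replace every sketched quantity appearing on the right-hand side by its unsketched counterpart while picking up appropriate factors of $(1\pm\varepsilon)$.

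First, apply \eqref{eq:sketch} in the form $\|\tilde{\vr}_m\|^2 \leq (1-\varepsilon)^{-1}\|S\tilde{\vr}_m\|^2$, noting that $\tilde{\vr}_m$ (though not itself in the Krylov subspace) is the image under $I-\widehat{\Phi}_{\CK_m}$ of $\vr_0$, and that the sketching lower bound holds for all vectors upon which the relevant embedding is designed to act — in particular, the sketched GMRES setting assumes $\varepsilon$-embedding on the relevant subspaces so that the residual norm can be controlled. Next, substitute the identity $\|S\tilde{\vr}_m\| = |\hat{s}_m|\, \|S\tilde{\vr}_{m-1}\|$ and then apply the upper half of \eqref{eq:sketch} to $\tilde{\vr}_{m-1}$: $\|S\tilde{\vr}_{m-1}\|^2 \leq (1+\varepsilon)\|\tilde{\vr}_{m-1}\|^2$. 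This yields
\begin{equation*}
  \|\tilde{\vr}_m\|^2 \;\leq\; \frac{1+\varepsilon}{1-\varepsilon}\, \hat{s}_m^2 \, \|\tilde{\vr}_{m-1}\|^2.
\end{equation*}

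Now use \Cref{theorem:givens-sine-sketched-gmres-angles} to identify $\hat{s}_m = \sin\ang{S\tilde{\vr}_{m-1}}{SA\CK_m(A,\vr_0)}$, and apply the upper half of the second bound of \Cref{lemma:sketched-trig-bounds} with $\vu = \tilde{\vr}_{m-1}$ and $\vv$ chosen so that $\ang{\vu}{\vv}$ equals the principal angle $\ang{\tilde{\vr}_{m-1}}{A\CK_m(A,\vr_0)} $ (this requires that the two-dimensional subspace spanned by $\vu,\vv$ is covered by the embedding guarantee; this is consistent with the way \Cref{lemma:sketched-trig-bounds} is used throughout the section). Writing $s_m,c_m$ for the sine and cosine of that unsketched angle, this gives
\begin{equation*}
  \hat{s}_m^2 \;\leq\; \frac{s_m^2 + 2\varepsilon(1+c_m)}{(1+\varepsilon)^2}.
\end{equation*}

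Substituting into the previous inequality yields
\begin{equation*}
  \|\tilde{\vr}_m\|^2 \;\leq\; \frac{1+\varepsilon}{1-\varepsilon}\cdot \frac{s_m^2 + 2\varepsilon(1+c_m)}{(1+\varepsilon)^2}\, \|\tilde{\vr}_{m-1}\|^2 \;=\; \frac{s_m^2 + 2\varepsilon(1+c_m)}{1-\varepsilon^2}\, \|\tilde{\vr}_{m-1}\|^2,
\end{equation*}
and taking square roots delivers \eqref{eq:sketched-gmres-Arnoldi-givensNormUpdate}. The main subtlety — not a difficult calculation but worth stating cleanly — is making precise which subspaces the embedding is required to be $\varepsilon$-accurate on: the chain above uses \eqref{eq:sketch} at the vectors $\tilde{\vr}_m$ and $\tilde{\vr}_{m-1}$ and uses \Cref{lemma:sketched-trig-bounds} on the two-dimensional subspace realizing the principal angle between $\tilde{\vr}_{m-1}$ and $A\CK_m(A,\vr_0)$. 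All of these live inside the augmented Krylov subspace already assumed to be embedded, so the hypotheses of the cited results are met and the bound follows.
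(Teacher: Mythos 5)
Your proof is correct and follows essentially the same route as the paper, whose own proof is only a one-line sketch instructing the reader to combine \Cref{theorem:givens-sine-sketched-gmres-angles}, the sketching assumption \eqref{eq:sketch}, and \Cref{lemma:sketched-trig-bounds}; you have simply written out that algebra, and the factors combine correctly to give $\frac{1}{1-\varepsilon^2}$. The one step you leave implicit — bounding the sketched subspace-angle sine $\hat{s}_m$ by the sketched vector-pair angle sine at a minimizing $\vv\in A\CK_m(A,\vr_0)$ before invoking the lemma — is valid (the subspace angle is the minimum over the subspace) and consistent with how the paper itself applies the lemma to subspace angles.
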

    \begin{proof}
        This estimate follows from combining the results in \Cref{theorem:givens-sine-sketched-gmres-angles} with the sketching assumption \eqref{eq:sketch} and the inequality relating sines of sketched angles to sines of the unsketched analogs \eqref{eq:sketched-trig-bounds}.  With a bit of algebra, this yields the result.
    \end{proof}
%    We note that to use this bound requires knowledge of $\varepsilon$, which is generally not known in the context of sketching.  However, if we have a practical upper bound for $\varepsilon$, we can use it to obtain a computable update for the upper bound on $\norm{\vr_m}$.  For example, if we know that $\varepsilon<\varepsilon_{bound}$, it follows that 
%    \begin{align*}
%        \norm{\tilde{\vr}_m}
%        \leq
%        \dfrac
%        {s_m+4\varepsilon}
%        {1-\varepsilon^2}
%        \norm{\tilde{\vr}_{m-1}}
%        \leq
%        \dfrac
%        {s_m+4\cdot \varepsilon_{bound}}
%        {1-\varepsilon_{bound}^2}
%        \norm{\tilde{\vr}_{m-1}}
%    \end{align*}
    
    This sort of relationship is useful in the case that we implement sketched (augmented) GMRES using \eqref{eq:sketched-Arnoldi} following \cite{BalabanovGrigori22}.  \rev{However, we are not in that setting, and using \eqref{eq:sketched-gmres-Arnoldi-givensNormUpdate} would incur a nontrivial expense, as it would undo the savings attained from using truncation}.  %Thus, we must transfer this estimate to the version using truncation.

%    An impetus in this context for using truncation is to keep the amount of work done per iteration (e.g., inner products) constant rather than growing with each iteration. Thus, any effort to exploit \eqref{eq:sketched-gmres-Arnoldi-givensNormUpdate} should not increase the per-iteration work in a way that grows with each iteration.
%    Consider that to obtain $\underline{\widehat{H}_m}$ would require that we run the full sketched Arnoldi process, defeating the computational advantage we achieve by truncating Arnoldi.  Instead, we propose to use this estimate sparingly, as a relatively cheap residual estimator to use when convergence is suspected based on looser estimates. 
%    Consider that from the development of \eqref{eq:truncated-sketched-Arnoldi}, we can obtain $\underline{\widehat{H}_m}$ at any iteration by computing the QR-factorization $SV_{m+1}=\widehat{W}_{m+1}R_{m+1}$ and obtaining $\underline{\widehat{H}_m} = R_{m+1}\underline{H_m}$.  The QR-factorization of $\underline{\widehat{H}_m}$ can be computed by applying Givens rotations to obtain the sequence of Givens sines, thereby enabling us to exploit \Cref{corollary:sketched-gmres-Arnoldi-givensNormUpdate} to estimate the residual (using an estimate of $\varepsilon$).  If it is determined that the iteration has not converged, $\underline{\widehat{H}_m}$, $\widehat{W}_{m+1}$, the Givens rotations and  can be stored for use in residual estimation for future iterations, to avoid redundant work.

\subsection{The relationship between sketched GMRES and sketched FOM}
Interestingly, it also follows from \eqref{eq:sketched-Arnoldi} to derive sketched GMRES that we can show that the well-known relationship between true GMRES and true FOM can be extended to the sketched counterparts.  However, the result is actually true for any mathematically equivalent implementations of sketched GMRES and sketched FOM~\cite{BG23}.  

For clarity, we identify quantities associated with sketched GMRES using the superscript~$\cdot^{(G)}$ and quantities associated with sketched FOM using the superscript~$\cdot^{(F)}$.
By sketched FOM, we simply mean that we apply a sketched version of the FOM constraint to define the FOM iterate $\tilde{\vx}_m^{(F)} = \vx_0 + \tilde{\vt}_m^{(F)}$ via
\begin{align*}
    \mbox{Select}\ \ \tilde{\vt}_m^{(F)}\in\CK( A,\vr_0)
    \ \ \mbox{such that}\ \ 
     S\left(\vb- A\left(\vx_0 + \tilde{\vt}_m^{(F)}\right)\right) 
    \perp 
     S \CK_m( A,\vr_0).
\end{align*}
For $\tilde{\vt}_m^{(F)} = V_m\tilde{\vy}_m^{(F)}$ where $\tilde{\vy}_m^{(F)}$ satisfies the FOM condition 
\begin{align*}
    \widehat{H}_m \tilde{\vy}_m^{(F)} 
    = 
    \norm{ S\vr_0}\ve_1,
\end{align*}
where $\widehat{H}_m$ is the square matrix formed by the first $m$ rows of ${\underline{\widehat H_m}}$.  This means that sketched GMRES and sketched FOM form an MR/OR pair, as described in, e.g., \cite[Section 1]{EiermannErnst2001}.  We omit extending every such result, but we point the reader to the results in \cite{Brown1991}, which are proven by comparing the QR-factorizations of the square and rectangular upper Hessenberg matrices. In particular, following the derivation of \cite[eq.~6.74]{Saad2003}, a comparison of the QR-factorizations of ${\underline{\widehat H_m}}$ and $\widehat{H}_m$ shows the following. 
\begin{theorem}
    The sketched GMRES and sketched FOM iterates have the same relationship as the true GMRES and FOM iterates, namely
    \begin{align*}
        \tilde{\vx}_m^{(G)}
        =
        c_m^2\tilde{\vx}_m^{(F)}
        +
        s_m^2\tilde{\vx}_{m-1}^{(G)}.
    \end{align*}    
\end{theorem}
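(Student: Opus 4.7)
The plan is to reduce the claim to the classical MR/OR identity of \cite{Brown1991} (equivalently \cite[eq.~(6.74)]{Saad2003}) by observing that sketched GMRES and sketched FOM, when expressed in coordinates with respect to the common Krylov basis $V_m$, reduce to exactly the classical Hessenberg least-squares / linear-system pair. Writing $\tilde{\vx}_m^{(G)} = \vx_0 + V_m\tilde{\vy}_m^{(G)}$, $\tilde{\vx}_m^{(F)} = \vx_0 + V_m\tilde{\vy}_m^{(F)}$, and $\tilde{\vx}_{m-1}^{(G)} = \vx_0 + V_m \begin{bmatrix}\tilde{\vy}_{m-1}^{(G)}\\0\end{bmatrix}$, and using $c_m^2 + s_m^2 = 1$ so that the constant terms $\vx_0$ combine correctly, the target identity is equivalent to the coordinate identity
\begin{equation*}
\tilde{\vy}_m^{(G)} \;=\; c_m^2\,\tilde{\vy}_m^{(F)} \,+\, s_m^2 \begin{bmatrix}\tilde{\vy}_{m-1}^{(G)}\\0\end{bmatrix}.
\end{equation*}

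Next, I would verify that the sketched coordinate systems match the classical ones. By \eqref{eq:sketched-gmres-arnoldi-minimization}, $\tilde{\vy}_m^{(G)}$ solves $\min_{\vy}\,\norm{\underline{\widehat{H}_m}\vy - \norm{S\vr_0}\ve_1}$ with $\underline{\widehat{H}_m}$ full upper Hessenberg, while by definition $\tilde{\vy}_m^{(F)}$ satisfies the square system $\widehat{H}_m\tilde{\vy}_m^{(F)} = \norm{S\vr_0}\ve_1$ where $\widehat{H}_m$ is the leading $m\times m$ block of $\underline{\widehat{H}_m}$. These are exactly the pair of problems analyzed in the classical GMRES/FOM derivation of \cite[Sec.~6.5.5]{Saad2003}; the only changes are that the Hessenberg matrix now arises from the sketched Arnoldi relation \eqref{eq:sketched-Arnoldi} and the right-hand-side scaling is $\beta = \norm{S\vr_0}$ rather than $\norm{\vr_0}$. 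Neither change affects the algebra of the progressive Givens-rotation QR factorization.

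Finally, I would invoke the classical derivation verbatim. That argument relies only on (i) the upper Hessenberg structure of $\underline{\widehat{H}_m}$, (ii) progressive QR factorization by Givens rotations, where $(c_m,s_m)$ is precisely the final rotation used to zero out the subdiagonal entry $\hat{h}_{m+1,m}$, and (iii) a right-hand side proportional to $\ve_1$. All three hold in our setting, and by \Cref{theorem:givens-sine-sketched-gmres-angles} the parameters $(c_m,s_m)$ appearing in the statement are exactly those produced by this factorization. The classical derivation then delivers the displayed coordinate identity; multiplying on the left by $V_m$ and adding $\vx_0$ yields the stated iterate identity. The main obstacle is conceptual rather than computational: one must recognize that \emph{nothing} in Saad's derivation depends on the provenance of the Hessenberg matrix or on the normalization of the right-hand side, so the entire argument transfers to the sketched setting without modification. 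Once this is recognized, the proof amounts to careful bookkeeping rather than a new calculation.
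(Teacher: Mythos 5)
Your proposal is correct and follows essentially the same route as the paper, which likewise reduces the claim to the classical MR/OR relationship by comparing the progressive Givens-rotation QR factorizations of the rectangular and square sketched Hessenberg matrices $\underline{\widehat{H}_m}$ and $\widehat{H}_m$, citing \cite{Brown1991} and the derivation of \cite[eq.~6.74]{Saad2003}. Your additional bookkeeping (the coordinate identity, the use of $c_m^2+s_m^2=1$ for the $\vx_0$ terms, and the remark that the scaling $\|S\vr_0\|$ is immaterial) simply fills in details the paper leaves implicit.
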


\smallskip

\section{Analysis of sketched augmented GMRES}\label{section:sketched-aug-gmres-analysis}
    \rev{%
    We have established that the sketched GMRES method is a projection method when considered on the correct space, with the projectors involving the embedding operator $S$.  
    }%
    In the context of the characterization of augmented residual minimization methods from \cite{Soodhalter:deSturler:Kilmer.2020.framework} that we summarize in \Cref{section:augmentation-framework}, we may ask: does a sketched augmented minimization method fit into a generalization of the framework from  \cite{Soodhalter:deSturler:Kilmer.2020.framework}?

    Consider the sketched augmented GMRES approximation $\widetilde{\vx}_m = \vx_0 + U\widetilde{\vz}_m + V_m\widetilde{\vy}_m$.  The vectors $\widetilde{\vz}_m$ and $\widetilde{\vy}_m$ are components of a solution to the normal equations
    \begin{align}
        \begin{bmatrix}
            U^\ast A^\ast S^\ast S A U
            &
            U^\ast A^\ast S^\ast S A V_m
            \\
            V_m^\ast A^\ast S^\ast S A U
            &
            V_m^\ast A^\ast S^\ast S A V_m
        \end{bmatrix}
        \begin{bmatrix}
            \widetilde{\vz}
            \\
            \widetilde{\vy}
        \end{bmatrix}
        &=
        \begin{bmatrix}
             U^\ast A^\ast S^\ast S\vr_0
             \\
              V_m^\ast A^\ast S^\ast S\vr_0
        \end{bmatrix}.
        \label{eq:block-normal-equations}
    \end{align}
    Because this is a sketched minimum residual method, these normal equations' coefficient matrix may have a null space; thus there could be multiple solutions.  By choosing 
    \begin{align*}
        \begin{bmatrix}
            \widetilde{\vz}_m
            \\
            \widetilde{\vy}_m
        \end{bmatrix}
        =
        \begin{bmatrix}
            U^\ast A^\ast S^\ast S A U
            &
            U^\ast A^\ast S^\ast S A V_m
            \\
            V_m^\ast A^\ast S^\ast S A U
            &
            V_m^\ast A^\ast S^\ast S A V_m
        \end{bmatrix}
        ^\dagger
        \begin{bmatrix}
             U^\ast A^\ast S^\ast S\vr_0
             \\
              V_m^\ast A^\ast S^\ast S\vr_0
        \end{bmatrix},
    \end{align*}
    the Moore--Penrose pseudoinverse solution, we obtain the solution which has minimum norm, containing no null space components. 
  %  \rev{However, if the assumption that the columns of $S A \vV_m$ are linearly independent holds, the pseudoinverse is actually a true inverse, and the null space is trivial.  TO-DO: KMS--if we actually formalize this as an assumption, we should refer back to it here.}
    
    In \cite{Soodhalter:deSturler:Kilmer.2020.framework}, the authors follow derivation first pointed out in \cite{ParksSoodhalterSzyld:2016:1} which uses block Gaussian elimination to eliminate $\widetilde{\vz}_m$ from the second block row of equations, yielding the result.  In the case that the matrix has a null space, general theory for when block Gaussian elimination can be applied to develop a generalized Schur complement has been derived; see, e.g., \cite{CarlsonHaynsworthMarkham:1974:gen-schur}.
    The case discussed in the present work is simpler.
    \begin{corollary}
        \rev{If the columns of $S A \vV_m$ are linearly independent}, the coefficient matrix in \eqref{eq:block-normal-equations} admits a Schur complement via elimination of the $(1,2)$ block using the block row operation
        \begin{align*}
            \begin{bmatrix}
                I
                &
                \rev{\mathbf{0}} \\
                -
                V_m^\ast A^\ast S^\ast S A U
                \left(
                    U^\ast A^\ast S^\ast S A U
                \right)^{-1}
                &
                I
            \end{bmatrix}
            \begin{bmatrix}
                U^\ast A^\ast S^\ast S A U
                &
                U^\ast A^\ast S^\ast S A V_m
                \\
                V_m^\ast A^\ast S^\ast S A U
                &
                V_m^\ast A^\ast S^\ast S A V_m
            \end{bmatrix},
        \end{align*}
        leading to the equivalent system
        \begin{align}
            \begin{bmatrix}
                U^\ast A^\ast S^\ast S A U
                &
                U^\ast A^\ast S^\ast S A V_m
                \\
                \mathbf{0}
                &
                V_m^\ast A^\ast S^\ast S
                \left(
                    I - \widehat{\Phi}_U
                \right)
                 A V_m.
            \end{bmatrix}
            \begin{bmatrix}
                \widetilde{\vz}
                \\
                \widetilde{\vy}
            \end{bmatrix}
            &=
            \begin{bmatrix}
                U^\ast A^\ast S^\ast S\vr_0
                \\
                V_m^\ast A^\ast S^\ast S
                \left(
                    I - \widehat{\Phi}_U
                \right)
                \vr_0
            \end{bmatrix}.
            \label{eq:block-ne-GE}
        \end{align}
    \end{corollary}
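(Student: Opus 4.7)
The plan is to carry out the block Gaussian elimination directly and then recognize that the resulting Schur complement has the factored form involving $(I-\widehat{\Phi}_U)$. First I would argue that the invertibility required by the $(2,1)$-block elimination matrix is valid under the hypothesis: since the columns of $S A\vV_m = S A [U, V_m]$ are linearly independent, so are the columns of $SAU$ as a submatrix. Hence $U^\ast A^\ast S^\ast S A U = (SAU)^\ast(SAU)$ is Hermitian positive definite, and in particular invertible, so the leading block is a true inverse and the block row operation is well-defined. As noted in the excerpt, this also means $(SAU)^\dagger = (U^\ast A^\ast S^\ast S A U)^{-1} U^\ast A^\ast S^\ast$, so
\[
\widehat{\Phi}_U = AU (SAU)^\dagger S = AU (U^\ast A^\ast S^\ast S A U)^{-1} U^\ast A^\ast S^\ast S.
\]

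Second, I would simply carry out the block product on the left-hand side of \eqref{eq:block-normal-equations}. The first block row is preserved. In the second block row, the $(2,1)$ block becomes
\[
-V_m^\ast A^\ast S^\ast S A U (U^\ast A^\ast S^\ast S A U)^{-1}\cdot U^\ast A^\ast S^\ast S A U + V_m^\ast A^\ast S^\ast S A U = \mathbf{0},
\]
as desired, and the $(2,2)$ block becomes the Schur complement
\[
V_m^\ast A^\ast S^\ast S A V_m - V_m^\ast A^\ast S^\ast S A U (U^\ast A^\ast S^\ast S A U)^{-1} U^\ast A^\ast S^\ast S A V_m.
\]
Factoring $V_m^\ast A^\ast S^\ast S$ on the left and $A V_m$ on the right yields exactly $V_m^\ast A^\ast S^\ast S (I - \widehat{\Phi}_U) A V_m$ upon inserting the expression for $\widehat{\Phi}_U$ above.

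Third, I would apply exactly the same block row operation to the right-hand side. The first block is preserved, and the second block becomes
\[
V_m^\ast A^\ast S^\ast S\vr_0 - V_m^\ast A^\ast S^\ast S A U (U^\ast A^\ast S^\ast S A U)^{-1} U^\ast A^\ast S^\ast S \vr_0 = V_m^\ast A^\ast S^\ast S (I - \widehat{\Phi}_U)\vr_0,
\]
again by the same factoring. Combining these identifications yields \eqref{eq:block-ne-GE}. Since the block row operation matrix is unit lower triangular, it is invertible and so the two linear systems have the same solution set, establishing the equivalence claimed in the corollary.

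The main obstacle, such as it is, is purely bookkeeping: one must be careful that the hypothesis on $SA\vV_m$ is precisely what guarantees that the pseudoinverse $(SAU)^\dagger$ collapses to a genuine inverse, so that the expression for $\widehat{\Phi}_U$ used in factoring the Schur complement matches the definition given earlier in \Cref{section:sketched-gmres-analysis}. Without that hypothesis, one would need the generalized Schur complement theory referenced from \cite{CarlsonHaynsworthMarkham:1974:gen-schur}, which is exactly what the corollary is designed to avoid.
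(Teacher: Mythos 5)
Your proposal is correct and follows essentially the same route as the paper: the corollary is presented there as an immediate consequence of the block Gaussian elimination discussed just before it, and your verification---full column rank of $SA\vV_m$ giving invertibility of $(SAU)^\ast SAU$ so that $(SAU)^\dagger$ collapses to a true inverse, then carrying out the block row operation and recognizing $V_m^\ast A^\ast S^\ast S\left(I-\widehat{\Phi}_U\right)$ in both the Schur complement and the transformed right-hand side, with equivalence following from the unit lower triangular (hence invertible) elimination matrix---simply makes that computation explicit. The only cosmetic caveat is that the eliminated entry is the $(2,1)$ block (the statement's wording ``$(1,2)$ block'' notwithstanding), which you handle correctly.
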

    Thus, as in \cite{Soodhalter:deSturler:Kilmer.2020.framework}, we have isolated in the second block of equations the variables $\widetilde{\vy}$ associated to the space $\rnga{V_m}$ that grows at each iterations.  We can prove the following theorem.
    \begin{theorem}\label{theorem:sketched-aug-gmres-subprob-equiv}
        For $U$ such that $\rnga{ A U}$ satisfies the sketching assumption \eqref{eq:sketch}, and for $m$ not too large, \rev{if the columns of $S A \vV_m$ are linearly independent}, applying augmented sketched GMRES for the augmented space $\rnga{\vV_m}$ is equivalent to applying a sketched GMRES iteration to the equation
        \begin{align}
            \left(
                I - \widehat{\Phi}_\vU
            \right)
             A\vt
            =
            \left(
                I - \widehat{\Phi}_\vU
            \right)
            \vr_0,
            \label{eq:sketched-aug-gmres-proj-subprob}
        \end{align}
        selecting $\vt_m=V_m\widetilde{\vy}_m\in\rnga{V_m}$ according to 
        \begin{align}
            \widetilde{\vy}_m
            =
            \underset
            {\vy\in\mathbb{C}^m}
            {\mathrm{argmin}}
            \left\| 
                 S
                \left(
                I - \widehat{\Phi}_\vU
                \right)
                \left(
                    \vr_0 -  A V_m \vy 
                \right)
                \right\|
                \label{eq:sketched-pseudoprojected-minimization}
        \end{align}
        and constructing $\widetilde{\vx}_m = \vx_0 + \widehat{\Pi}_U\veta_0 + (I-\widehat{\Pi}_U)\widetilde{\vt}_m$.
    \end{theorem}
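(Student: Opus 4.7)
The plan is to start from the block-normal equations \eqref{eq:block-normal-equations} and its Schur-reduced form \eqref{eq:block-ne-GE}, and then read off from its two block rows the two ingredients of the conclusion: the projected minimization for $\widetilde{\vy}_m$ and the construction formula for $\widetilde{\vx}_m$. The linear-independence hypothesis on the columns of $SA\vV_m$ (together with the fact that $\rnga{AU}$ satisfies the sketching assumption) is what guarantees the invertibility of $U^\ast A^\ast S^\ast S A U$ needed to perform the Schur elimination, and it also ensures that all pseudoinverses appearing in the projectors $\widehat{\Phi}_U$ and $\widehat{\Pi}_U$ collapse to genuine inverses.

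\textbf{Step 1 (projected minimization for $\widetilde{\vy}_m$).} The second block row of \eqref{eq:block-ne-GE} reads
\begin{equation*}
V_m^\ast A^\ast S^\ast S \left( I - \widehat{\Phi}_U \right) A V_m \, \widetilde{\vy}_m
=
V_m^\ast A^\ast S^\ast S \left( I - \widehat{\Phi}_U \right) \vr_0.
\end{equation*}
Using idempotency of $\widehat{\Phi}_U$ together with the identity $S\widehat{\Phi}_U = \Psi_{SAU} S$ established for $\CK_m$ in \Cref{section:sketched-gmres-analysis} (applied here with $V_m$ replaced by $U$), one checks that this is exactly the normal equation for the least squares problem in \eqref{eq:sketched-pseudoprojected-minimization}. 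By definition, this coincides with one step of sketched GMRES, over the search space $\rnga{V_m}$, applied to the projected equation \eqref{eq:sketched-aug-gmres-proj-subprob}. This proves the first claim.

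\textbf{Step 2 (construction formula for $\widetilde{\vx}_m$).} Writing the augmented update in the original form $\widetilde{\vx}_m = \vx_0 + U\widetilde{\vz}_m + V_m\widetilde{\vy}_m = \vx_0 + U\widetilde{\vz}_m + \widetilde{\vt}_m$, I use the first block row of \eqref{eq:block-ne-GE} to solve for $\widetilde{\vz}_m$:
\begin{equation*}
\widetilde{\vz}_m
=
\left( U^\ast A^\ast S^\ast S A U \right)^{-1} U^\ast A^\ast S^\ast S \left( \vr_0 - A\widetilde{\vt}_m \right).
\end{equation*}
Multiplying by $U$ and invoking the definition of $\widehat{\Pi}_U$, together with $A\veta_0 = \vr_0$, yields
\begin{equation*}
U\widetilde{\vz}_m
=
\widehat{\Pi}_U\left( \veta_0 - \widetilde{\vt}_m \right)
=
\widehat{\Pi}_U\veta_0 - \widehat{\Pi}_U \widetilde{\vt}_m,
\end{equation*}
so that $U\widetilde{\vz}_m + \widetilde{\vt}_m = \widehat{\Pi}_U\veta_0 + (I - \widehat{\Pi}_U)\widetilde{\vt}_m$, which is the asserted construction formula.

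\textbf{Main obstacle.} The substantive work is not the algebraic manipulation but ensuring the pseudoinverses behave as genuine inverses: the hypothesis that the columns of $SA\vV_m$ are linearly independent is needed both for the Schur reduction to be valid (so $U^\ast A^\ast S^\ast S A U$ is invertible) and for $\widehat{\Phi}_U$, $\widehat{\Pi}_U$ to be the ``honest'' projectors whose identities I exploit in Steps 1 and 2. The qualifier ``$m$ not too large'' in the hypotheses is precisely to keep this linear-independence condition realistic when combined with the $\varepsilon$-subspace-embedding property on $\rnga{AU}$; once it holds, the rest of the proof is a direct consequence of the Schur complement identity already proven in the preceding corollary.
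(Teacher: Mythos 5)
Your proof is correct and follows essentially the same route as the paper: the paper also reads the projected minimization \eqref{eq:sketched-pseudoprojected-minimization} off the second block row of \eqref{eq:block-ne-GE} and substitutes $\widetilde{\vy}_m$ back into the first block row to obtain $U\widetilde{\vz}_m = \widehat{\Pi}_U\veta_0 - \widehat{\Pi}_U\widetilde{\vt}_m$. If anything, you are more careful than the paper's terse argument, since you retain the factor $\left( U^\ast A^\ast S^\ast S A U \right)^{-1}$ when solving the first block row and you justify, via $S\widehat{\Phi}_U = \Psi_{SAU}S$ and idempotency, why the second block row coincides with the normal equations of the projected least squares problem.
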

    \begin{proof}
        This has mostly already been proven in the above discussion.  We observe only that the second block of equations in \eqref{eq:block-ne-GE} defines $\widetilde{\vy}$ to be the exactly the minimizer described in \eqref{eq:sketched-pseudoprojected-minimization}.  Inserting $\widetilde{\vy}_m$ into the first block of equations yields
        \begin{align*}
            \widetilde{\vz}
            &=
            U^\ast A^\ast S^\ast S\vr_0
            -
            U^\ast A^\ast S^\ast S A V_m
            \widetilde{\vy}_m
            \\
            \iff
            U \widetilde{\vz}
            &=
            \widehat{\Pi}_U\veta_0
            -
            \widehat{\Pi}_U
            \vt_m,
        \end{align*}
        completing the proof.
    \end{proof}
    It follows directly that the full augmented sketched GMRES residual is the same as that of the projected subproblem, meaning that the residual behavior of this method is completely dictated by the properties of the subproblem and the space $\rnga{V_m}$.
    \begin{corollary}\label{cor:projected-full-resid-equiv}
        Let $U$ and $AU$ satisfy the same conditions as in \Cref{theorem:sketched-aug-gmres-subprob-equiv}. Then it follows the full sketched augmented GMRES residual $\widetilde{\vr}_m = \vb - A\widetilde{\vx}_m$ is the same as the residual from approximately solving \eqref{eq:sketched-aug-gmres-proj-subprob} via \eqref{eq:sketched-pseudoprojected-minimization}; i.e.,
        \begin{align*}
            \widetilde{\vr}_m
            =
            \left(
                I - \widehat{\Phi}_\vU
            \right)
            \left(
                \vr_0 -  A \vt_m 
            \right)
        \end{align*}
    \end{corollary}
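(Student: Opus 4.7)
The plan is to take the solution decomposition from \Cref{theorem:sketched-aug-gmres-subprob-equiv}, namely $\widetilde{\vx}_m = \vx_0 + \widehat{\Pi}_U\veta_0 + (I-\widehat{\Pi}_U)\widetilde{\vt}_m$, substitute it directly into the residual expression $\widetilde{\vr}_m = \vb - A\widetilde{\vx}_m = \vr_0 - A(\widetilde{\vx}_m - \vx_0)$, and then use the intertwining relation between the error projector $\widehat{\Pi}_U$ and the residual projector $\widehat{\Phi}_U$ to collapse the expression into the desired form.

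First, I would isolate the key algebraic identity that relates the two projectors across multiplication by $A$. From the definitions $\widehat{\Pi}_U = U(SAU)^\dagger SA$ and $\widehat{\Phi}_U = AU(SAU)^\dagger S$, one immediately observes
\begin{align*}
    A \widehat{\Pi}_U \;=\; AU(SAU)^\dagger SA \;=\; \widehat{\Phi}_U A.
\end{align*}
This is the central mechanism of the proof, and it is the analog of the relation used in \cite{Soodhalter:deSturler:Kilmer.2020.framework} between error and residual projectors in the nonsketched setting.

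With this identity in hand, I would expand the residual as
\begin{align*}
    \widetilde{\vr}_m
    = \vr_0 - A\widehat{\Pi}_U\veta_0 - A(I-\widehat{\Pi}_U)\widetilde{\vt}_m.
\end{align*}
Applying $A\widehat{\Pi}_U = \widehat{\Phi}_U A$ to the first correction term yields $A\widehat{\Pi}_U\veta_0 = \widehat{\Phi}_U A\veta_0 = \widehat{\Phi}_U\vr_0$ since $\veta_0 = \vx - \vx_0$ implies $A\veta_0 = \vr_0$. For the second correction term, the same identity gives $A(I-\widehat{\Pi}_U)\widetilde{\vt}_m = (I-\widehat{\Phi}_U)A\widetilde{\vt}_m$. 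Combining,
\begin{align*}
    \widetilde{\vr}_m
    = (I - \widehat{\Phi}_U)\vr_0 - (I - \widehat{\Phi}_U)A\widetilde{\vt}_m
    = (I - \widehat{\Phi}_U)(\vr_0 - A\widetilde{\vt}_m),
\end{align*}
which is the claimed identity.

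The proof is essentially a short algebraic manipulation, so there is no serious obstacle; the only point requiring a moment of care is verifying that $A\widehat{\Pi}_U = \widehat{\Phi}_U A$ in the sketched setting without relying on invertibility of $S^\ast S$. This follows directly from the definitions of the pseudoinverse-based projectors, and is valid under the hypothesis (inherited from \Cref{theorem:sketched-aug-gmres-subprob-equiv}) that the columns of $SA\vV_m$ are linearly independent so that the projectors $\widehat{\Pi}_U$ and $\widehat{\Phi}_U$ are well-defined.
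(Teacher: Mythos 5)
Your proposal is correct and follows essentially the same route as the paper's proof: substitute the decomposition $\widetilde{\vx}_m = \vx_0 + \widehat{\Pi}_U\veta_0 + (I-\widehat{\Pi}_U)\widetilde{\vt}_m$ into $\vb - A\widetilde{\vx}_m$ and collapse using the intertwining identity $A\widehat{\Pi}_U = \widehat{\Phi}_U A$ together with $A\veta_0 = \vr_0$. The only difference is cosmetic: you explicitly verify the intertwining identity from the pseudoinverse definitions, whereas the paper simply cites it as a known fact.
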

    \begin{proof}
        We compute
        \begin{align*}
            \widetilde{\vr}_m
            &=
            \vb - A\widetilde{\vx}_m
            \\
            &=
            \vb - A(\vx_0 + \widehat{\Pi}_U\veta_0 + (I-\widehat{\Pi}_U)\widetilde{\vt}_m)
            \\
            &=
            \vr_0 - \widehat{\Phi}_U\vr_0 - (I-\widehat{\Phi}_U)A\widetilde{\vt}_m),
        \end{align*}
        where we have taken advantage of the fact that $A\widehat{\Pi}_U = \widehat{\Phi}_U A$.
    \end{proof}

    \begin{remark}
        We make the observation that, were we to build $V_m$ such that 
        \begin{align*}
            \rnga{V_m} 
            = 
            \CK_m
            \left(  
                \left(
                    I - \widehat{\Phi}_\vU
                \right)
                A
                ,
                \left(
                    I - \widehat{\Phi}_\vU
                \right)
                \vr_0
            \right),
        \end{align*}
        we could use \Cref{cor:projected-full-resid-equiv} to exploit the results from \Cref{theorem:givens-sine-sketched-gmres-angles} and thereafter directly, as the method could directly be interpreted as a fully sketched version of GCRO-DR.   \Cref{cor:projected-full-resid-equiv} then implies that the residual convergence behavior of the iterative method is equivalent to that of sketched GMRES applied to \eqref{eq:proj-subprobem}, and we can estimate the residual norm using the Givens sines, as described in \Cref{theorem:givens-sine-sketched-gmres-angles}.  \rev{We are not considering a fully sketched version of GCRO-DR in this paper, but it would be interesting to consider our analysis applied to a sketched GCRO-DR.}
    \end{remark}

    We have demonstrated that we can gain computational advantage by forgoing the building of a sketched projected Krylov subspace.  Instead, we have
    \begin{align*}
        \rnga{V_m} 
        = 
        \CK_m
        \left( 
            A, \vr_0
        \right).
    \end{align*}    
    The price for this advantage is that \Cref{cor:projected-full-resid-equiv} no longer implies that the iterative method is equivalent to a sketched GMRES iteration applied to \eqref{eq:proj-subprobem}.  However, all is not lost; we can still use \Cref{theorem:givens-sine-sketched-gmres-angles}.  Consider that via truncated sketched Arnoldi, we have the relation 
    \begin{align*}
        SA\vV_m 
        =
        \begin{bmatrix}
            SAU
            &
            SV_{m+1}
        \end{bmatrix}
        \begin{bmatrix}
            I & 
            \\
             & \underline{H_m}\,
        \end{bmatrix}.
    \end{align*}
    We compute the economy QR-factorization
    $
        \begin{bmatrix}
            SAU
            &
            SV_{m+1}
        \end{bmatrix}
        =
        \widehat{\vW}_{m+1}
        R_m
    $,
    and block the upper triangular matrix
    \begin{align*}
        R_m
        =
        \begin{bmatrix}
            R_m^{(1,1)} & R_m^{(1,2)}
            \\
            & R_m^{(2,2)}
        \end{bmatrix},
        \qquad
        \mbox{with}
        \qquad
        R_m^{(1,1)} \in \mathbb{C}^{k\times k}.
    \end{align*}
    Then we can construct the upper-Hessenberg matrix
    \begin{align*}
    \underline{G_m} 
    = 
    R_m
    \begin{bmatrix}
        I & 
        \\
         & \underline{H_m}\,
    \end{bmatrix}
    =
    \begin{bmatrix}
        R_m^{(1,1)} & R_m^{(1,2)}\underline{H_m}
        \\
         & R_m^{(2,2)}\underline{H_m}\,
    \end{bmatrix}
    =
    \begin{bmatrix}
        R_m^{(1,1)} & \ast
        \\
         &\underline{\widehat{H}_m}
    \end{bmatrix},
    \end{align*}
    where $\underline{\widehat{H}_m} := R_m^{(2,2)}\underline{H_m}$.  It follows that we have the relation
    \begin{align*}
        SA\vV_m 
        =
        \widehat{\vW}_{m+1}
        \underline{G_m}.
    \end{align*}
    We observe that $\underline{G_m}$ is upper Hessenberg but with an already-triangular (1,1)-block.  This means that a Givens rotation-based QR factorization of $\underline{G_m}$ reduces to applying Givens rotations to $\underline{\widehat{H}_m}$.
    The proof of 
    \Cref{theorem:givens-sine-sketched-gmres-angles} allows us to conclude that
    \begin{align*}
        \ang
        {S\vr_0}
        {\rnga{SA\vV_m}} 
        = 
        \ang
        {\ve_1}
        {\rnga{\underline{G_m}}} 
        =
        \ang
        {\ve_1}
        {\rnga{\underline{Q_m}}},
    \end{align*}
    where $\underline{Q_m}\in\mathbb{C}^{(m+1)\times(m+1)}$ is the Q-factor of the full QR factorization of $\underline{G_m}$.
    Thus, because of the partially-triangularized structure of $\underline{G_m}$, we can use the same Givens rotation estimator strategy shown in \Cref{corollary:sketched-gmres-Arnoldi-givensNormUpdate}.  We demonstrate the effectiveness of the residual estimator for an example problem in \Cref{fig:resid-estimator.pdf}.    
    \begin{remark}
        Note that in \Cref{fig:resid-estimator.pdf}, we compute the estimate for every iteration for demonstrative purposes.  In reality, the frequency of the use of this estimate would depend on which implementation of augmented sketched GMRES.  If an orthonormal basis for $\rnga{SA\vV_m}$ is unavailable, the estimate would be used sparingly, due to the aforementioned expense of obtaining it.  However, we note that in \Cref{alg:gmres_sdr}, a basis-whitening technique is used which does produce an orthonormal basis.  In that case, Givens rotations could be computed progressively, one-per-iteration, in the usual way for GMRES. This would allow the estimate to be computed inexpensively.
    \end{remark}
    It is also of interest that given some true residual norm computations, one could use the bound from \Cref{corollary:sketched-gmres-Arnoldi-givensNormUpdate} to get an estimate for $\varepsilon$.  This is a possible alternative method to the method for estimation of $\varepsilon$ advocated in \Cref{sec:control-and-safety}, which is what is actually implemented in our proposed algorithm.

\begin{figure}[h]
    \centering
\includegraphics[width=.8\textwidth]{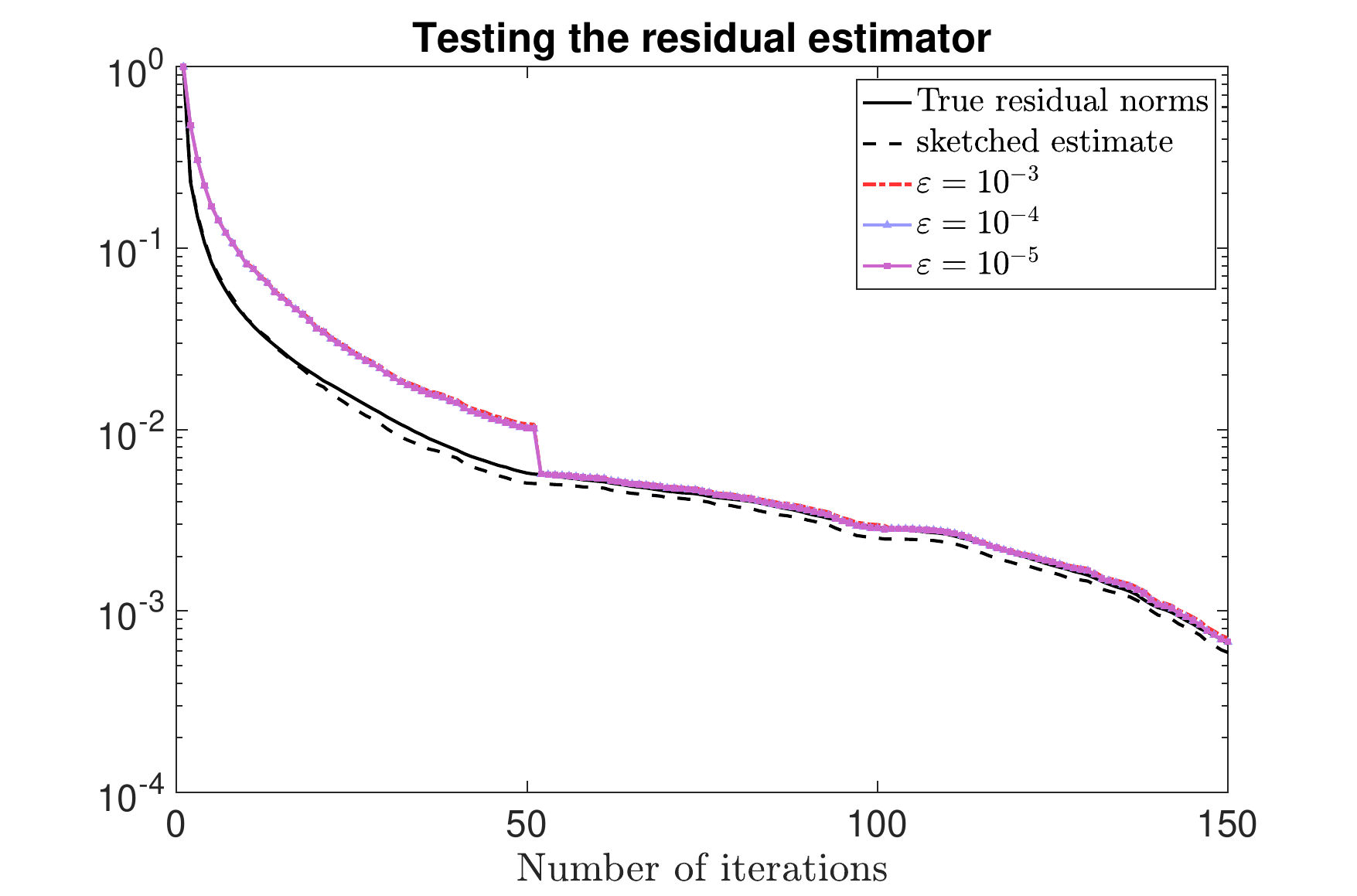}
    \caption{Testing the Givens rotation-based residual estimator $\norm{\tilde{\vr}_m}~\leq~\sqrt{\dfrac{s_m^2 + 2\varepsilon(1+c_m)} {1-\varepsilon^2}}\norm{\tilde{\vr}_{m-1}}$ for a problem of Neumann type from the experiments in \Cref{section.Neumann-experiments} for various proposed values of $\varepsilon$.}
    \label{fig:resid-estimator.pdf}
\end{figure}

\section{Numerical experiments}\label{sec:numerical-experiments}
We now present numerical experiments demonstrating the effectiveness of GMRES-SDR. We compare  to MATLAB's \texttt{gmres}, the RANDGMRES method in the \texttt{randKrylov}  package\footnote{\url{https://github.com/obalabanov/randKrylov}, version as of November 2023}, GCRO-DR and GMRES-DR. 
\rev{The sketching operator for GMRES-SDR is a subsampled randomized discrete cosine transform (DCT) in all cases: $S = PFE$, where $E\in\mathbb{R}^{N\times N}$ is a diagonal matrix with  diagonal entries $\pm 1$ with equal probability, $F\in\mathbb{R}^{N\times N}$ is a DCT, and $P\in\mathbb{R}^{s\times N}$ selects $s$ rows of $FE$ at random; see also \cite[Sec~8.1.1.]{NakatsukasaTropp21}. Such a sketching operator can be applied in $O(N\log s)$ operations; see, e.g.,~\cite[Section~3.3]{WoolfeLibertyRokhlinTygert2008}. The sketching parameter is fixed at $s = 8m_{\max}$, where $m_{\max}$ is the maximum Krylov dimension we encounter.} All experiments were performed in MATLAB Online, using version R2023B.

\subsection{Stokes problem}
In the first experiment we solve a single linear system $A \vx = \vb$, where $A$ is the \texttt{vas\_stokes\_1M} matrix of size $N=1,090,664$  from the SuiteSparse Matrix Collection~\cite{davis2011university}. We solve the system with ILU preconditioning to a target relative residual of $10^{-6}$. The residual curves are shown in \Cref{fig:stokes}~(left).
The true residual norms for RANDGMRES and GMRES-SDR are only computed and plotted at the end of each restart cycle as within each cycle only the sketched residual is cheaply available.

\begin{figure}
% /MATLAB Drive/Liam-Stefan NEW/GMRES_SDR/Tests/test_convdiff_sdr.m
\hspace*{-0mm}\includegraphics[
 width=.5\textwidth]{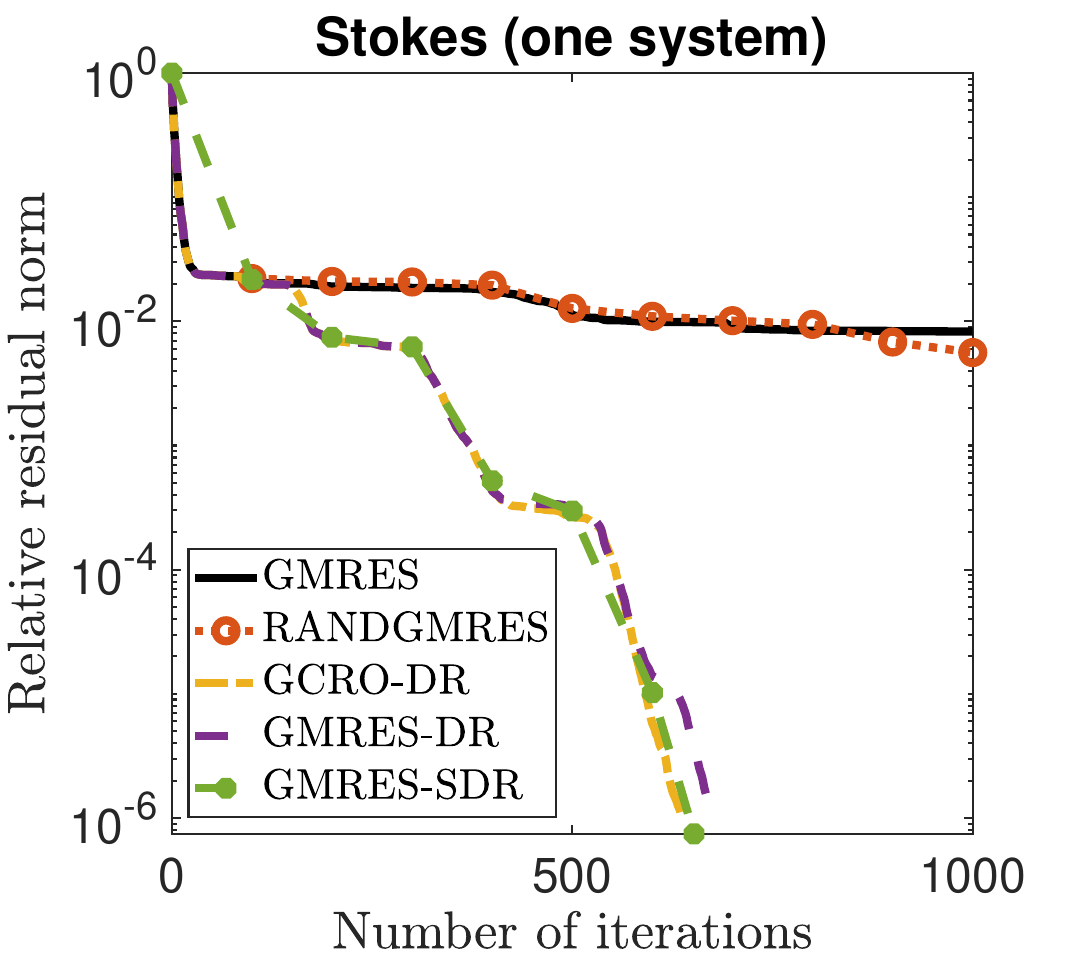}
\hspace*{-3mm}\includegraphics[
 width=.5\textwidth]{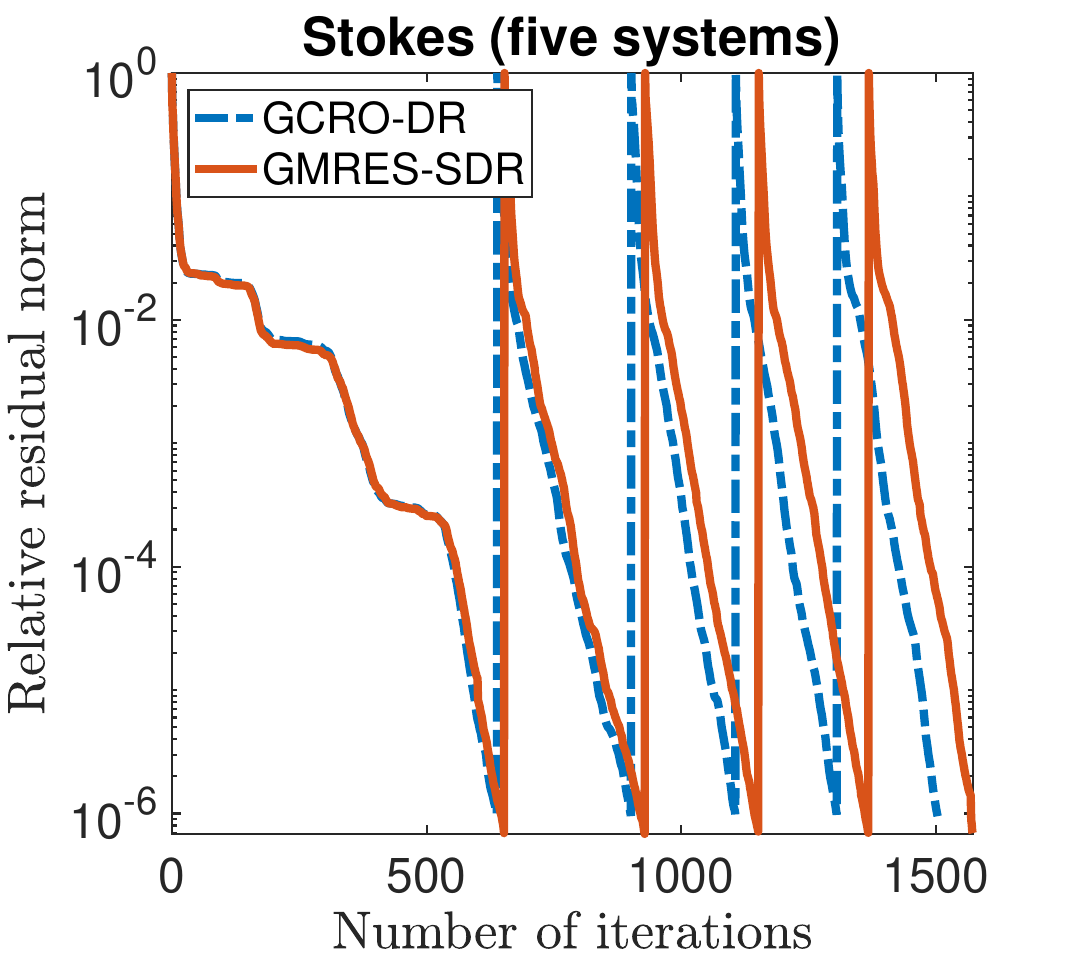}
\caption{Convergence curves obtained from solving a single linear system $A\vx = \vb$ (left), and a sequence of $5$ linear systems (right)  with the  \texttt{vas\_stokes\_1M} matrix, to a residual tolerance of $10^{-6}$. All non-augmented methods use a maximum number of $m = 100$ Arnoldi iterations, while the augmented methods use an augmentation subspace of dimension $k = 20$, and take a maximum of $m-k$ iterations. All sketching methods take $s = 10(m+k)$ and an Arnoldi truncation parameter $t = 2$. A maximum of $10$ restarts is allowed for all methods.}
\label{fig:stokes}
\end{figure}

It is clear from \Cref{fig:stokes} (left) that the convergence of GMRES-SDR is accelerated by  augmented restarting, and it produces convergence curves comparable with GCRO-DR and GMRES-DR.

In \Cref{tab:stokes_dr} we  record the total computational resources utilized by each method to converge, which we measure using the total number of matrix-vector products with~$A$ (denoted MATVECS or
MV), the total number of inner products (IP) of size~$N$, and the overall runtime (T) in seconds, required to produce the convergence curves in \Cref{fig:stokes} (left). 
%Note that for this particular experiment, due to the size of $A$, the reported timings are for a single run of each solver.

\begin{table}
\centering
\begin{tabular}{p{0.8cm}| c c c c c}
   & \rev{GMRES} &  RANDGMRES & GCRO-DR  & GMRES-DR & GMRES-SDR\\
   \hline 
   MV & 1,000 & 1,000 & 670 &   674 & 656\\
    IP  & 55,550 & 11 & 51,138 & 45,229 & 1,955\\
    T (s) & 287.6 & 408.8  & 196.6 & 190.7 & 178.9   \\ 
\end{tabular}
 \caption{Performance metrics for solving a single linear system $A\vx = \vb$ with the  \texttt{vas\_stokes\_1M} matrix to a residual tolerance of $10^{-6}$ and with ILU preconditioning. \rev{GMRES\- and RANDGMRES do not reach the targeted residual tolerance within the maximal number of 10 restarts.}}
 \label{tab:stokes_dr}
\end{table} 

We also solve a sequence of $10$ systems $A\vx^{(i)}=\vb^{(i)}$ with the same fixed matrix and randomly generated right-hand sides (unit Gaussian). The performance metrics are given in \Cref{tab:stokes_10systems}. 

\begin{table}
\centering
\begin{tabular}{p{0.8cm}| c c c c c}
   & GMRES &  RANDGMRES & GCRO-DR  &  GMRES-DR  & GMRES-SDR \\
   \hline 
     MV & 10,000 & 10,000 & 2,646 & 6,943 & 3,439   \\
    IP  & 555,500 & 110 & 208,398 &  468,478  & 10,255 \\
    T (s) & 2535.2  & 4070.8 & 1474.7 &  1924.4 & 914.8  \\ 
\end{tabular}
 \caption{Performance metrics for solving a sequence of $10$ linear systems $A\vx^{(i)} = \vb^{(i)}$  with the  \texttt{vas\_stokes\_1M} matrix to a residual tolerance of $10^{-6}$ and with ILU preconditioning.  \rev{GMRES\- and RANDGMRES do not reach the targeted residual tolerance within the maximal number of 10 restarts for any of the ten problems.}}
\label{tab:stokes_10systems}
\end{table} 

From \Cref{tab:stokes_dr,tab:stokes_10systems} we see in that the combination of recycling and sketching in GMRES-SDR leads to a beneficial reduction in MATVECS and inner products, allowing for the fastest runtime for a single system  or a  sequence of systems. 

In \Cref{fig:stokes} (right) we plot the relative residual curves of both GCRO-DR and GMRES-SDR for the first $5$~systems in the sequence of Stokes problems. We see that the convergence of both methods is almost identical, and it is significantly improved with recycling beyond the first problem in the sequence.

\subsection{Neumann problem}\label{section.Neumann-experiments}
We now solve $50$ linear systems $A\vx^{(i)}=\vb^{(i)}$ for fixed matrix $A$ constructed as $A = D + c I$ where $D$ is the ``Neumann'' matrix of size $10,609$ taken from MATLAB's \texttt{gallery}, $I$ is the identity matrix and $c = 0.0001$. The right-hand side vectors $\vb^{(i)}$ were generated with random unit Gaussian entries. The utilized computational resources are shown in \Cref{tab:neumann_50systems}. The reported timings are averages over~$10$ runs of each solver.

\begin{table}
\centering
\begin{tabular}{p{0.8cm}| c c c c c}
   & GMRES &  RANDGMRES & GCRO-DR  &  GMRES-DR  &  GMRES-SDR \\
   \hline 
   MV & 50,000  & 50,000  & 8,978  & 20,528   &   6,906 \\
    IP & 2,777,500 & 550 & 531,179 &  1,164,290   &20,556\\
    T (s) & 122.1  & 74.7 & 17.3  &  34.2 & 11.8 \\ 
\end{tabular}
 \caption{Performance metrics for solving $50$ linear systems $A\vx^{(i)} = \vb^{(i)}$ with the Neumann matrix to a residual tolerance of $10^{-6}$. All non-augmented methods use a maximum number of $m = 100$ Arnoldi iterations, while the augmented methods use an augmentation subspace of dimension $k = 20$, and take a maximum of $m - k=80$ iterations. All sketching methods use  $s = 10(m+k)$ and an Arnoldi truncation parameter $t = 2$. A maximum of $10$ restarts is allowed for all methods. \rev{GMRES\- and RANDGMRES do not reach the targeted residual tolerance within the maximal number of 10 restarts for any of the fifty problems.}}
\label{tab:neumann_50systems}
\end{table} 

In \Cref{fig:neumann} we  plot the relative residual  curves obtained from solving both the first (left) and last (right) problem, respectively. We see that the GMRES-SDR convergence behaves in the same way as GCRO-DR, and substantially improves by the time the last problem is solved. We observe that GCRO-DR converges in slightly fewer iterations than GMRES-SDR, although as shown in \Cref{tab:neumann_50systems}, the overall runtime of GMRES-SDR is considerably lower. We also note that while GMRES-DR improves the convergence  through deflated restarts for each problem separately, GCRO-DR and GMRES-SDR lead to an overall faster convergence as they benefit from a recycling subspace that is continuously  improved as the problem sequence progresses. This is a key characteristic of a successful recycling method.

\begin{figure}
% /MATLAB Drive/Liam-Stefan NEW/GMRES_SDR/Tests/test_convdiff_sdr.m

%trim = 15 200 15 200,clip,
\centering
\hspace*{-0mm}\includegraphics[
 width=.50\textwidth]{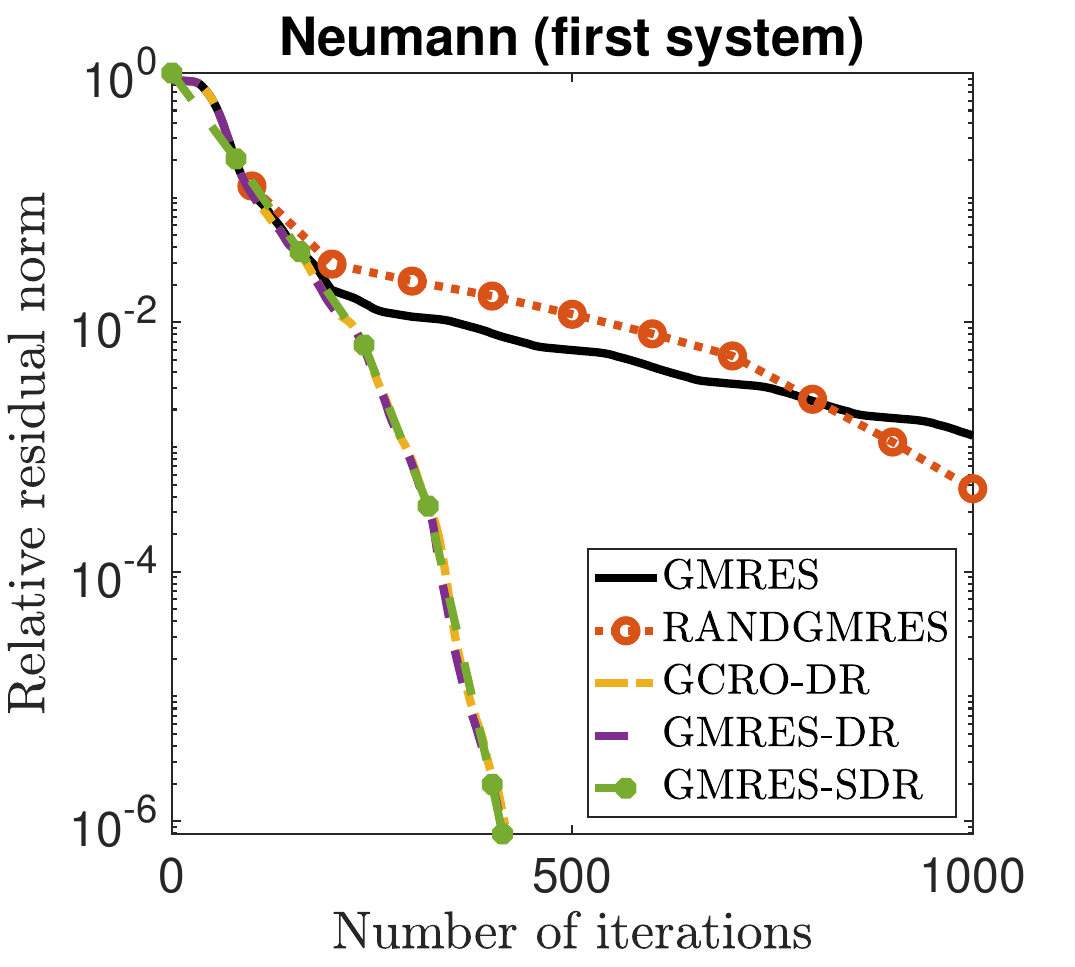}
\hspace*{-2mm}\includegraphics[ 
 width=.50\textwidth]{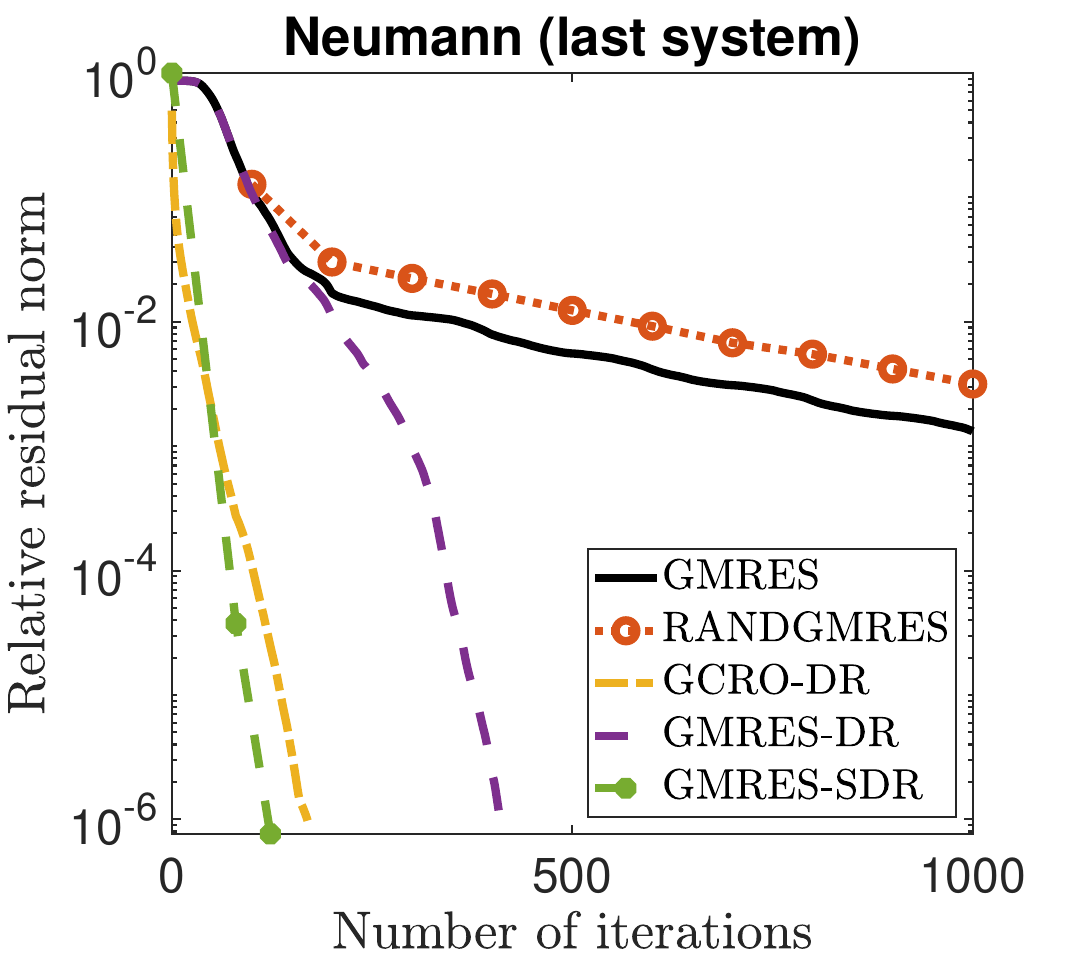}
\caption{Convergence curves for the first (left) and last (right) Neumann problem.}
\label{fig:neumann}
\end{figure}

\subsection{Convection--diffusion problem}\label{sec:cd} Finally, we consider a problem with varying system matrix $A^{(i)}$ arising from the finite-difference discretization of a 2D convection--diffusion problem for different convection strengths. More precisely, we use 
$A^{(i)} = (L \otimes I + I \otimes L) + \alpha^{(i)} ( D \otimes I + I \otimes D)$, where
\[
L = (n+1)^2\begin{bmatrix}
    -2 & 1 \\
     1 & -2 & \ddots \\
     & \ddots & \ddots & 1 \\
     & & 1 & -2
\end{bmatrix},\ 
D = \frac{n+1}{2}\begin{bmatrix}
    0 & 1 \\
     -1 & 0 & \ddots \\
     & \ddots & \ddots & 1 \\
     & & -1 & 0
\end{bmatrix} \in\mathbb{R}^{n\times n}, \  n = 500.
\]
The vector $\vb$ is chosen as the vector of all ones.  The convection strength is chosen as $\alpha^{(0)}=0,\alpha^{(1)}=5,\alpha^{(2)}=20$. The convergence of GCRO-DR and GMRES-SDR is plotted in \Cref{fig:cd} and the performance metrics are listed in \Cref{tab:cd}. In all cases, we a use Krylov dimension of $m=80$ and an augmentation dimension of $k=20$. For GMRES-SDR we consider the two variants discussed in \Cref{sec:slow}. The first one, labeled ``inexact'', corresponds to using $U, SU, SA^{(i)} U$ as the augmentation for problem~$i+1$. In other words, we are \emph{not} computing $S A^{(i+1)} U$ with the new matrix~$A^{(i+1)}$. 

There are several interesting observations to be made. First of all, GCRO-DR converges robustly for all three problems, and the convergence seems to indeed benefit from the augmentation. For the first two problems with $A^{(1)}$ and $A^{(2)}$, GMRES-SDR (inexact) and GMRES-SDR (exact) converge almost identically and are comparable to GCRO-DR.  However, for the third problem with $A^{(3)}$, GMRES-SDR (inexact) fails  with an erratically increasing residual. Note how this increase is not tracked by the residuals of the sketched problems  (shown as the yellow dotted curve), which are still decreasing within each restart cycle. As explained in \Cref{sec:slow}, the inexactness destroys the relation \eqref{eq:reschain} between the true and sketched residual problems. This is not the case for GMRES-SDR (exact), which decreases the sketched residual steadily (and the true residual almost monotonically) as expected from a  GMRES-type method. GMRES-SDR (inexact) fails because  the change in going from $A^{(2)}$ to $A^{(3)}$ is ``too large'', while it appears to be okay to go from $A^{(1)}$ to $A^{(2)}$ without computing $S A^{(2)} U$ explicitly.

\begin{figure}
% /MATLAB Drive/Liam-Stefan NEW/GMRES_SDR/Tests/test_convdiff_sdr.m
\hspace*{-0mm}\includegraphics[width=.349\textwidth]{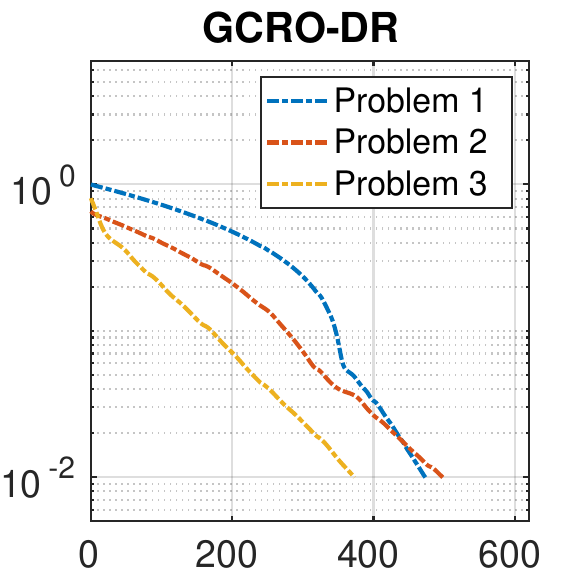}
\hspace*{-4mm}\includegraphics[width=.349\textwidth]{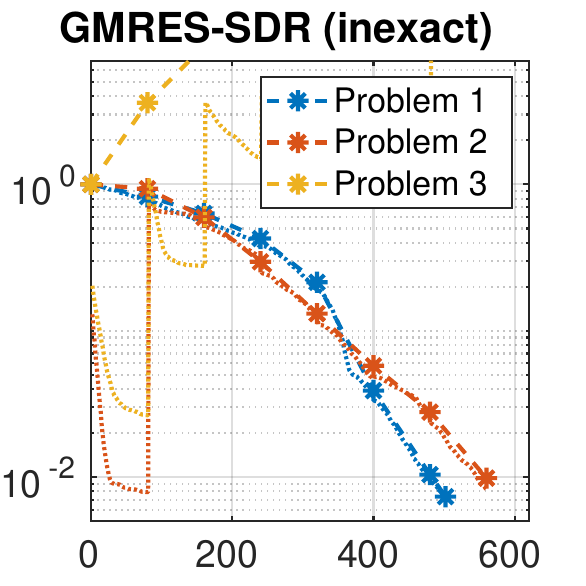}
\hspace*{-4mm}\includegraphics[width=.349\textwidth]{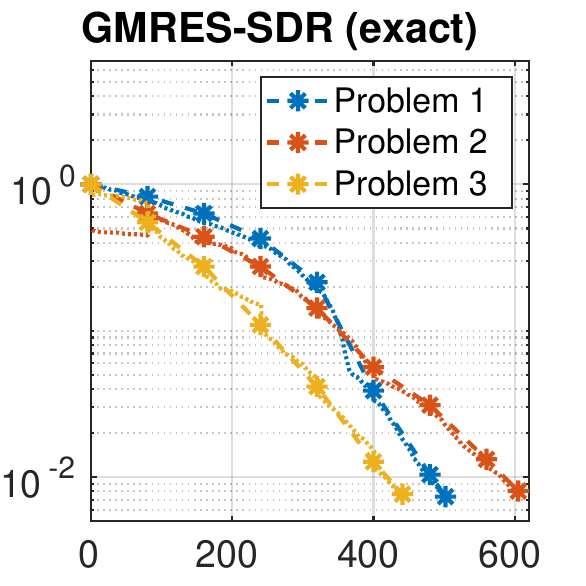}
\caption{Convergence curves for three convection--diffusion problems with increasing convection strength. For the two GMRES-SDR variants, there are two curves for each problem: (i) a dashed curve  with ``*'' markers showing the true residual after each restart cycle, and (ii) a dotted curve corresponding to the residual of the sketched problem.}
\label{fig:cd}
\end{figure}

\begin{table}
%/MATLAB Drive/Liam-Stefan NEW/GMRES_SDR/Tests/test_convdiff_sdr.m
\centering
\begin{tabular}{p{2.3cm}|c c c}
   & GCRO-DR &  GMRES-SDR  & GMRES-SDR \\
   & & (inexact) & (exact)\\
   \hline 
   MATVECS & 1,345 & 1,886 & 1,971 \\
   Inner products & 105,695 & 5,613 & 4,668\\
   Time (s) & 38.7 & 30.8 &  27.0 \\ 
\end{tabular}
 \caption{Solving three convection--diffusion problems $A^{(i)}\vx^{(i)} = \vb$. GMRES-SDR (inexact) fails to converge to the target   residual norm of $10^{-2}$ for the third problem.}
 \label{tab:cd}
\end{table}

\iffalse
\subsection{Network problem}
As a final experiment, we consider a problem with a sequence of changing matrices. We solve a sequence of $50$ linear systems constructed from a of size $8,297 \times 8,297$ obtained from the SuiteSparse matrix collection as matrix ID $2288$. Each matrix in the sequence is constructed by randomly flipping the entries of this matrix from $0$ to $1$ and vice versa. We then scale the matrix as $ I + \alpha A$, where $\alpha$ is chosen such that $\alpha < 1/\gamma(A)$, where $\gamma(A)$ is the spectral radius of the newly generated matrix $A$. The right-hand sides are chosen to be unit vectors.
\fi

\section{Conclusions and future work}  We have introduced GMRES-SDR, a GMRES variant that combines sketching with deflated restarting. Our numerical tests indicate that GMRES-SDR can improve over GCRO-DR and GMRES-DR in terms of arithmetic cost and runtime. The runtime reduction compared to GCRO-DR (the best competing method) ranged between 30\% and 38\% on the problems we considered. This is primarily the result of a reduction in inner products due to sketching, and sometimes even a reduction in matrix-vector products due to improved deflation (see, e.g., Table~\ref{tab:neumann_50systems} where GMRES-SDR required about 23\% fewer matrix-vector products than GCRO-DR).

Our analysis extended and generalized several results known for GMRES to the sketched and augmented case. In particular, we characterized  GMRES-SDR as a projection method using a semi-inner product. 

In future work we would like to understand better the behavior of the ``inexact'' GMRES-SDR variant. In particular, it would be useful to know a priori what constitutes a sufficiently slow variation in $A^{(i)}$ so that the inexact variant still converges. 

\rev{
\section*{Acknowledgements}
The authors would like to thank the referees and the handling editor for their constructive critiques and comments.  Implementing them greatly improved the quality of the exposition in this paper.
}

\color{black}

\bibliographystyle{plain}
\bibliography{refs}

\newcommand{\noopsort}[1]{} \newcommand{\printfirst}[2]{#1}
  \newcommand{\singleletter}[1]{#1} \newcommand{\switchargs}[2]{#2#1}
\begin{thebibliography}{10}

\bibitem{BalabanovGrigori21}
O.~Balabanov and L.~Grigori.
\newblock Randomized block {G}ram-{S}chmidt process for solution of linear
  systems and eigenvalue problems.
\newblock Technical Report arXiv:2111.14641, 2021.

\bibitem{BalabanovGrigori22}
O.~Balabanov and L.~Grigori.
\newblock Randomized {G}ram-{S}chmidt process with application to {GMRES}.
\newblock {\em SIAM J. Sci. Comput.}, 44(3):A1450--A1474, 2022.

\bibitem{balabanov2019randomized}
O.~Balabanov and A.~Nouy.
\newblock Randomized linear algebra for model reduction. {Part I: Galerkin
  methods and error estimation}.
\newblock {\em Adv.\ Comput.\ Math.}, 45:2969--3019, 2019.

\bibitem{balabanov2021randomized}
O.~Balabanov and A.~Nouy.
\newblock Randomized linear algebra for model reduction--part {II}: minimal
  residual methods and dictionary-based approximation.
\newblock {\em Adv.\ Comput.\ Math.}, 47:1--54, 2021.

\bibitem{eveolving_structures}
M.~Bolten, E.~de~Sturler, C.~Hahn, and M.~L. Parks.
\newblock Krylov subspace recycling for evolving structures.
\newblock {\em Comput. Methods Appl. Mech. Engrg.}, 391:114222, 2022.

\bibitem{Brown1991}
P.~Brown.
\newblock A theoretical comparison of the {A}rnoldi and {GMRES} algorithms.
\newblock {\em SIAM J.\ Sci.\ Stat.\ Comput.}, 12(1):58--78, 1991.

\bibitem{BG23}
L.~Burke and S.~G\"{u}ttel.
\newblock Krylov subspace recycling with randomized sketching for matrix
  functions.
\newblock arXiv EPrint arXiv:2308.02290, arXiv, 2023.

\bibitem{CarlsonHaynsworthMarkham:1974:gen-schur}
D.~Carlson, E.~Haynsworth, and T.~Markham.
\newblock A generalization of the {S}chur complement by means of the
  {M}oore–{P}enrose inverse.
\newblock {\em SIAM J. Appl. Math.}, 26(1):169--175, 1974.

\bibitem{davis2011university}
T.~A. Davis and Y.~Hu.
\newblock The {U}niversity of {F}lorida sparse matrix collection.
\newblock {\em ACM Trans. Math. Software}, 38(1):1--25, 2011.

\bibitem{EiermannErnst2001}
M.~Eiermann and O.~G. Ernst.
\newblock Geometric aspects of the theory of {K}rylov subspace methods.
\newblock {\em Acta Numer.}, 10:251--312, 2001.

\bibitem{Essai:1998:weighted-gmres}
A.~Essai.
\newblock Weighted {FOM} and {GMRES} for solving nonsymmetric linear systems.
\newblock {\em Numer. Algorithms}, 18(3-4):277--292, 1998.

\bibitem{gaul2014recycling}
A.~Gaul.
\newblock {\em {R}ecycling {K}rylov subspace methods for sequences of linear
  systems}.
\newblock PhD thesis, Technische Universität Berlin, Fakultät II - Mathematik
  und Naturwissenschaften, 2014.

\bibitem{GS23}
S.~G\"{u}ttel and M.~Schweitzer.
\newblock Randomized sketching for {K}rylov approximations of large-scale
  matrix functions.
\newblock {\em SIAM J. Matrix Anal. Appl.}, 44(3):1073--1095, 2023.

\bibitem{martinsson2020randomized}
P.-G. Martinsson and J.~A. Tropp.
\newblock Randomized numerical linear algebra: {F}oundations and algorithms.
\newblock {\em Acta Numer.}, 29:403--572, 2020.

\bibitem{Morgan2002}
R.~B. Morgan.
\newblock {GMRES} with deflated restarting.
\newblock {\em SIAM J.\ Sci.\ Comput.}, 24(1):20--37, 2002.

\bibitem{NakatsukasaTropp21}
Y.~Nakatsukasa and J.~A. Tropp.
\newblock Fast and accurate randomized algorithms for linear systems and
  eigenvalue problems.
\newblock {\em SIAM J. Matrix Anal. Appl.}, 45(2):1183--1214, 2024.

\bibitem{PaigeEtAl1995}
C.~C. Paige, B.~N. Parlett, and H.~A. van~der Vorst.
\newblock Approximate solutions and eigenvalue bounds from {K}rylov subspaces.
\newblock {\em Numer.\ Linear Algebra Appl.}, 2(2):115--133, 1995.

\bibitem{palitta2023sketched}
D.~Palitta, M.~Schweitzer, and V.~Simoncini.
\newblock Sketched and truncated polynomial {K}rylov methods: {E}valuation of
  matrix functions.
\newblock Technical Report arXiv:2306.06481, 2023.

\bibitem{Parks:2005-Thesis}
M.~L. Parks.
\newblock {\em The iterative solution of a sequence of linear systems arising
  from nonlinear finite element analysis}.
\newblock University of Illinois at Urbana-Champaign, 2005.

\bibitem{parks2006recycling}
M.~L. Parks, E.~de~Sturler, G.~Mackey, D.~D. Johnson, and S.~Maiti.
\newblock {R}ecycling {K}rylov subspaces for sequences of linear systems.
\newblock {\em SIAM {J}. {S}ci. {C}omput.}, 28(5):1651--1674, 2006.

\bibitem{ParksSoodhalterSzyld:2016:1}
M.~L. Parks, K.~M. Soodhalter, and D.~B. Szyld.
\newblock A block recycled {GMRES} method with investigations into aspects of
  solver performance.
\newblock {\em arXiv preprint arXiv:1604.01713}, 2016.

\bibitem{Saad2003}
Y.~Saad.
\newblock {\em Iterative Methods for Sparse Linear Systems, 2nd edition}.
\newblock SIAM, Philadelphia, 2000.

\bibitem{SaadSchultz1986}
Y.~Saad and M.~Schultz.
\newblock {GMRES:} {A} generalized minimal residual algorithm for solving
  nonsymmetric linear systems.
\newblock {\em SIAM J.\ Sci.\ Stat.\ Comput.}, 7(3):856--869, 1986.

\bibitem{sarlos2006improved}
T.~Sarlos.
\newblock Improved approximation algorithms for large matrices via random
  projections.
\newblock In {\em 47th Annual IEEE Symposium on Foundations of Computer Science
  (FOCS'06)}, pages 143--152. IEEE, 2006.

\bibitem{Soodhalter:deSturler:Kilmer.2020.framework}
K.~M. Soodhalter, E.~de~Sturler, and M.~E. Kilmer.
\newblock A survey of subspace recycling iterative methods.
\newblock {\em GAMM-Mitt.}, 43(4):e202000016, 29, 2020.

\bibitem{woodruff2014sketching}
D.~P. Woodruff.
\newblock Sketching as a tool for numerical linear algebra.
\newblock {\em Found. Trends Theor. Comput. Sci.}, 10(1--2):1--157, 2014.

\bibitem{WoolfeLibertyRokhlinTygert2008}
F.~Woolfe, E.~Liberty, V.~Rokhlin, and M.~Tygert.
\newblock A fast randomized algorithm for the approximation of matrices.
\newblock {\em Appl.\ Comput.\ Harmon.\ Anal.}, 25:335--366, 2008.

\end{thebibliography}

\end{document}